\documentclass[11pt,reqno]{amsart}
\usepackage{amscd,amssymb,amsmath,amsthm}
\usepackage{mathtools}
\usepackage[english]{babel}
\usepackage[T1]{fontenc}
\usepackage[arrow,matrix]{xy}
\usepackage{graphicx,subfigure,tikz,tikz-3dplot}

\usetikzlibrary{positioning,matrix,arrows,calc}
\usepackage{cite}
\usepackage{dsfont}
\usepackage{enumitem}
\usepackage{bbm}
\usepackage{hyperref}

\hypersetup{
	colorlinks=true,
	linkcolor=blue}
\usepackage{array}
\usepackage{xargs}                      
\usepackage[colorinlistoftodos,prependcaption,textsize=tiny]{todonotes}

\topmargin=0.1in \textwidth6in \textheight7.8in
\oddsidemargin=0.3in \evensidemargin=0.3in \theoremstyle{plain}
\newtheorem{theorem}{Theorem}[section]
\newtheorem{lemma}[theorem]{Lemma}
\newtheorem{proposition}[theorem]{Proposition}
\newtheorem{corollary}[theorem]{Corollary}

\theoremstyle{definition}

\newtheorem{definition}[theorem]{Definition}

\numberwithin{equation}{section} \setcounter{tocdepth}{1}

\newcommand{\Prob}{\mathbb{P}}

\newcounter{hypocounter}
\renewcommand\thehypocounter{(H\arabic{hypocounter})} 
\setcounter{hypocounter}{0}




\title[Analytic Dependence for Random Matrix Products]{Analytic Dependence of the Lyapunov moment function and the projective stationary measure for Random Matrix Products}
\author{C. Chalhoub, V.P.H. Goverse, J.S.W. Lamb, M. Rasmussen}

\address{C. Chalhoub\\  Department of Mathematics,
	Imperial College London,
	180 Queen's Gate,
	London SW7 2AZ,
	United Kingdom}
\email{christopher.chalhoub21@imperial.ac.uk}

\address{V.P.H. Goverse\\  Department of Mathematics,
	Imperial College London,
	180 Queen's Gate,
	London SW7 2AZ,
	United Kingdom}
\email{vincent.goverse21@imperial.ac.uk}

\address{J.S.W. Lamb\\  Department of Mathematics,
	Imperial College London,
	180 Queen's Gate,
	London SW7 2AZ,
	United Kingdom\newline
	International Research Center for Neurointelligence (IRCN), The University of Tokyo, Tokyo 
113-0033, Japan\newline
Centre for Applied Mathematics and Bioinformatics, Department of Mathematics and Natural Sciences, Gulf University for Science and Technology, Halwally, 32093 Kuwait.
	}
\email{jsw.lamb@imperial.ac.uk}

\address{M. Rasmussen\\  Department of Mathematics,
	Imperial College London,
	180 Queen's Gate,
	London SW7 2AZ,
	United Kingdom}
\email{m.rasmussen@imperial.ac.uk}	

\begin{document}

\begin{abstract}
We consider the product of i.i.d. random matrices sampled according to a probability measure $\mu$ supported on a strongly irreducible and proximal subset of a compact set $S\subset GL(d,\mathbb{R})$. We establish the local analyticity of the Lyapunov moment function and the unique stationary measure on the projective space with respect to $\mu$ in the total variation topology. As a consequence, we obtain the analyticity of the asymptotic variance and all higher-order Lyapunov moments.
\end{abstract}
\maketitle

\section{Introduction}



Products of random matrices form a classical topic in probability theory, ergodic theory, and the study of random dynamical systems. Since the foundational work of Furstenberg and Kesten \cite{furstenberg1960products}, they have served as a fundamental model for multiplicative stochastic processes. Given i.i.d. matrices $A_n\in GL(d,\mathbb{R})$ with common law $\mu$, the asymptotic growth of $A_\omega^n = A_n \dots A_1$ is governed by the top Lyapunov exponent \[ \lambda_\mu := \lim_{n\to \infty} \frac{1}{n} \mathbb{E}_\mu [\ln \|A_\omega^n\|] \]
where $\mathbb{E}_{\mu}$ is the expectation with respect to $\Prob_\mu \equiv \mu^{\otimes \mathbb{N}}$. \\

A major theme in the theory concerns the dependence of Lyapunov exponents on the underlying probability measure. Under some irreducibility conditions, Furstenberg and Kifer \cite{furstenbergkifer1983} established the continuity of $\mu \mapsto \lambda_\mu$ in a specific weak topology on the space of probability measures. This result has been recently improved by Avila and Viana \cite{avila2023continuity}, who established continuity in a topology corresponding to weak$^*$-closeness of the distributions and Hausdorff-closeness of their supports.

In the case of positive matrices, Ruelle has proved analytic dependence on the matrix entries \cite{MR534172}.
Other works have considered the question of continuity when
the matrices depend smoothly on a finite-dimensional parameter \cite{hennion1984loi,kifer1982perturbations}. Spectral gap techniques for the associated transfer operator, considered by Le Page \cite{lepage1989} and Guivarc’h and Raugi \cite{guivarc1985frontiere}, were central to the study of regularity properties of $\lambda_\mu$, and the authors provided settings where the Lyapunov exponent is differentiable in the finite-dimensional parameter. In the case of a finite number of invertible matrices, Peres \cite{peres2006analytic,MR1158741} proved real-analyticity of the Lyapunov exponent on the open simplex of positive probability vectors. Our work is  motivated by a recent result by Amorim, Durães and Melo \cite{amorim2025analiticity}, where the authors use tools from complex analysis on Banach spaces to prove the analyticity of $\lambda_\mu$ in $\mu$ with respect to the total variation norm. To extend their work, we use operator perturbation arguments (see Kato \cite{MR203473}) applied to the twisted Koopman operator rather than establishing the analytic variation of observables on the projective space directly.\\

Since the proof of existence and uniqueness of the Furstenberg stationary measure (\cite{furstenberg1960products}) on the projective space under strong irreducibility and proximality conditions, a series of results began to clarify its finer properties including Hölder regularity and convergence in direction. Later results achieved a classification of all stationary probability measures on the projective space for i.i.d. random matrix products with finite first moment. Notable contributions include \cite{guivarc1985frontiere}, \cite{AounGuivarc}, \cite{aounsert1} and \cite{aounsert2}.\\  

The review above suggests that the stability properties of the top Lyapunov exponent are well-understood. However, little is known regarding other important observables of the system, for instance the asymptotic variance \[ \sigma^2_\mu \coloneqq \lim_{n\to \infty} \frac{1}{n} \mathbb{E}_\mu [ (\ln \Vert A_\omega^n\Vert  - \lambda_\mu n)^2 ]. \]
The Lyapunov moment function $\Lambda_\mu(q)$ defined below captures the asymptotic information of the system. For instance, $\Lambda_\mu '(0)=\lambda_\mu$ and $\Lambda_\mu ''(0)=\sigma^2_\mu$. Our main contribution is establishing analyticity of $\Lambda_\mu(q)$ in $\mu$ with respect to the total variation norm. In particular, we prove that all the Lyapunov moments $\Lambda^{(k)}_\mu(0)$ are analytic in $\mu$, which extends the result of \cite{amorim2025analiticity} to all the higher moments of the Lyapunov moment function, while also providing an alternative proof of the result of \cite{amorim2025analiticity}. We also show that the stationary probability measure (seen as a bounded linear functional on the projective space) varies analytically in $\mu$. \\

The results of this paper clarify the picture of the analytic structure of random matrix products and open avenues for further investigation of infinite-dimensional perturbations beyond the linear context for more general random dynamical systems. Indeed many open questions remain on the dependence of invariant manifolds and invariant measures on the underlying noise distribution in that context.



\section{Set-up and Statement of the Main Results}
Denote by $\mathbb R^d$ the $d$-dimensional Euclidean space and $\mathbb S^{d-1}$ the $d-1$-dimensional unit sphere. For $x\in \mathbb{R}^d$, let $\Vert x \Vert = \left( \sum_{i=1}^d |x_i|^2 \right)^{1/2}$ be the Euclidean norm. Let $GL(d,\mathbb{R})$ denote the space of $d \times d$ invertible matrices with real-valued entries endowed with the matrix operator norm \[ \Vert A \Vert = \sup_{x\in \mathbb{S}^{d-1}} \Vert Ax\Vert.  \] 

We endow $GL(d,\mathbb{R})$ with the Borel sigma-algebra. Let $S\subset GL(d,\mathbb{R})$ and fix a probability measure $\mu$ on $S$. Let $\Omega = S^{\mathbb{N}}$ and $\mathcal{F}$ the sigma-algebra generated by the cylinder sets. Let $\Prob_\mu = \mu^{\otimes \mathbb{N}}$ and $\mathbb{E}_\mu$ the expectation with respect to $\Prob_\mu$. Each $\omega\in \Omega$ is a sequence of independent matrices $\omega = (\omega_1, \omega_2,\dots)$ sampled according to $\mu$.\\

We define the product of random matrices on $(\Omega, \mathcal{F},\Prob)$ at time $n\in \mathbb{N}$ as the random variable $A_\omega^n: \Omega\to GL(d,\mathbb{R})$ given by $A_\omega^n = \omega_n \omega_{n-1} \dots \omega_1 $. For $x\in \mathbb{R}^d$, $(A_\omega^nx)_{n\geq0}$ is a Markov process on $\mathbb{R}^d$.

We define certain characteristics we impose on the support $W\subset GL(d,\mathbb{R})$ of the measure $\mu$ from \cite{RM_Book}.

\begin{definition}
    The set $W$ is said to be \textit{proximal} if there exists $n\geq1$ and a tuple $(A_1,\dots,A_n)\in W^n$ such that the product $\prod_{i=1}^n A_i$ admits an algebraic simple dominant eigenvalue.
\end{definition}

\begin{definition}
    The set $W$ is said to be \textit{strongly irreducible} if there does not exist a finite family of linear subspaces $ \{0\} \subsetneq V_1,\dots, V_k \subsetneq \mathbb{R}^d$ whose union is preserved by every element of $W$, i.e. such that \[ A(V_1 \cup V_2 \dots \cup V_k ) = V_1 \cup V_2 \dots \cup V_k \]
    for all $A\in W$.
\end{definition}

A probability measure is said to be proximal and strongly irreducible if its support satisfies these conditions. We would like to consider the dynamics of the system on the projective space of directions $\mathbf{P}(\mathbb{R}^d)$ which we associate to the unit sphere $\mathbb{S}^{d-1}$. We define the induced process $(\theta_n)$ on $\mathbb{S}^{d-1}$ by $\theta_0 = \frac{x}{\Vert x\Vert }$ and $\theta_n = \frac{A_\omega^n(x)}{\Vert A_\omega^n(x)\Vert }$. Note that $(\theta_n)$ is a Markov process unlike the process $(\Vert A_\omega^n\Vert )_n$ that describes the norm of the random matrix product.

A probability measure $\nu$ on $\mathbb{S}^{d-1}$ is said to be \textit{$\mu$-invariant} if for every bounded measurable function $f$ on $\mathbb{S}^{d-1}$, we have \[ \int_{\mathbb{S}^{d-1}} f (x) \ d\nu(x) = \int_S \int_{\mathbb{S}^{d-1}} f \left( \frac{A x}{\Vert A x\Vert }  \right) \ d\nu(x) d\mu(A). \]

We define the Lyapunov exponents of the system. The top Lyapunov exponent yields the asymptotic exponential growth rate of the norm of the product.

\begin{definition}
    For a matrix $M\in GL(d,\mathbb{R})$, denote by $\tilde{\sigma}_1(M)\geq \tilde{\sigma}_2(M)\geq \dots \geq\tilde{\sigma}_d(M)$ its singular values. Let $(A_\omega^n)$ be a product of random matrices satisfying $ \int_S \ln^+ \Vert A\Vert d\mu(A)<\infty$. The associated Lyapunov exponents $(\lambda_p)_{p=1}^d$ are defined by 
    \[ \lambda_p \coloneqq \lim_{n\to \infty} \frac{1}{n} \mathbb{E}[\ln \tilde{\sigma}(A_\omega^n) ] . \]
    In particular, the top Lyapunov exponent is given by 
    \[\lambda \coloneqq \lambda_1 = \lim_{n\to \infty} \frac{1}{n} \mathbb{E} [\ln \Vert   A_\omega^n \Vert ] .\]
\end{definition}

Furstenberg and Kesten (\cite{furstenberg1960products}) proved the following fundamental result.

\begin{theorem}{\cite{furstenberg1960products}}
    Let $\mu$ be an irreducible probability measure on $S\subset GL(d,\mathbb{R})$ such that $ \int_S \ln^+ \Vert A\Vert d\mu(A) <\infty$. Then for all $x\in \mathbb{S}^{d-1}$, we have \[ \Prob_\mu \left[  \lambda = \lim_{n\to \infty }\frac{1}{n} \ln \Vert A_\omega^n x\Vert   \right]=1.  \]
\end{theorem}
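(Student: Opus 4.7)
The plan is threefold: apply Kingman's subadditive ergodic theorem to the operator norm, then run a Birkhoff argument along a skew product over a stationary projective measure to handle generic initial directions, and finally invoke irreducibility to promote the statement from $\nu$-almost every direction to every direction $x\in\mathbb{S}^{d-1}$.

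To begin, I would work on $(\Omega,\mathcal{F},\Prob_\mu)$ with the Bernoulli shift $\sigma$, which is ergodic. The decomposition $A_\omega^{n+m}=A_{\sigma^n\omega}^m\cdot A_\omega^n$ combined with submultiplicativity of the operator norm shows that $f_n(\omega):=\ln\Vert A_\omega^n\Vert$ is subadditive in the sense $f_{n+m}(\omega)\le f_n(\omega)+f_m(\sigma^n\omega)$, and the moment hypothesis $\int_S\ln^+\Vert A\Vert\,d\mu(A)<\infty$ gives $\mathbb{E}_\mu[f_1^+]<\infty$. Kingman's theorem then yields $\tfrac{1}{n}\ln\Vert A_\omega^n\Vert\to\lambda$ both $\Prob_\mu$-a.s.\ and in $L^1$. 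Since $\Vert A_\omega^n x\Vert\le\Vert A_\omega^n\Vert$ for every $x\in\mathbb{S}^{d-1}$, the upper bound $\limsup_n\tfrac{1}{n}\ln\Vert A_\omega^n x\Vert\le\lambda$ follows $\Prob_\mu$-a.s.

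For the matching lower bound, I would exploit the telescoping cocycle identity $\ln\Vert A_\omega^n x\Vert=\sum_{k=0}^{n-1}\ln\Vert\omega_{k+1}\theta_k\Vert$ with $\theta_k=A_\omega^k x/\Vert A_\omega^k x\Vert$. By Krylov--Bogolyubov applied to the Markov operator on $\mathbf{P}(\mathbb{R}^d)$ induced by $\mu$, at least one $\mu$-stationary probability measure $\nu$ exists, and the skew product $T(\omega,\bar y):=(\sigma\omega,\overline{\omega_1 y})$ preserves $\Prob_\mu\otimes\nu$. Birkhoff's ergodic theorem applied to the observable $(\omega,\bar y)\mapsto\ln\Vert\omega_1 y\Vert$ (integrable thanks to the moment assumption) then gives convergence of $\tfrac{1}{n}\ln\Vert A_\omega^n x\Vert$ for $(\Prob_\mu\otimes\nu)$-a.e.\ initial pair, with asymptotic mean equal to the Furstenberg integral $\lambda_\nu:=\iint\ln\Vert A\bar y\Vert\,d\mu(A)\,d\nu(\bar y)$. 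A Furstenberg--Kifer variational argument, in which irreducibility enters for the first time, forces $\lambda_\nu=\lambda$ for every ergodic stationary $\nu$; this produces the desired limit for $\nu$-a.e.\ initial direction.

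To upgrade from a.e.\ to every $x$, I would introduce the random set $W(\omega):=\{x\in\mathbb{R}^d:\limsup_n\tfrac{1}{n}\ln\Vert A_\omega^n x\Vert<\lambda\}\cup\{0\}$, which by the triangle inequality for $\Vert\cdot\Vert$ is a linear subspace of $\mathbb{R}^d$, and which satisfies the equivariance $W(\sigma\omega)=\omega_1 W(\omega)$ directly from the cocycle relation. Ergodicity of $\sigma$ makes $\dim W(\omega)$ almost surely constant; if it were positive, the pushforward of $\Prob_\mu$ under $\omega\mapsto W(\omega)$ would be a $\mu$-stationary probability measure on a Grassmannian whose support provides a nontrivial family of proper subspaces preserved by the support of $\mu$, contradicting the irreducibility hypothesis. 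Hence $W(\omega)=\{0\}$ $\Prob_\mu$-a.s., giving $\Prob_\mu[x\in W(\omega)]=0$ for every fixed $x\neq 0$, which combined with the upper bound yields the theorem. This last step is the main obstacle: converting the measurable equivariant family $\omega\mapsto W(\omega)$ into a deterministic family of proper invariant subspaces is the delicate point where the full strength of the irreducibility hypothesis is genuinely consumed.
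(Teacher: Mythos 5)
This theorem is quoted from Furstenberg--Kesten/Furstenberg--Kifer and the paper supplies no proof of it, so I can only assess your argument on its own terms. Your first two steps are the standard route and are fine: Kingman applied to the subadditive sequence $\ln\Vert A_\omega^n\Vert$ gives the a.s.\ upper bound $\limsup_n \tfrac1n\ln\Vert A_\omega^n x\Vert\le\lambda$ for every $x$, and the skew-product/Birkhoff argument over a stationary $\nu$ gives the matching limit for $\nu\otimes\Prob_\mu$-a.e.\ pair (note only quasi-integrability of $\ln\Vert Ay\Vert$ is available from $\int\ln^+\Vert A\Vert\,d\mu<\infty$, which Birkhoff tolerates). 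Be aware, though, that the assertion you cite as ``a Furstenberg--Kifer variational argument'' --- that every stationary $\nu$ satisfies $\lambda_\nu=\lambda$ under irreducibility --- is essentially the whole content of the theorem, so invoking it wholesale leaves the real work undone.

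The genuine error is in your last step. The claim that $W(\omega)=\{0\}$ $\Prob_\mu$-a.s.\ is false in general: whenever $\lambda_1>\lambda_2$ (e.g.\ in the strongly irreducible, proximal setting the rest of the paper works in), $W(\omega)$ is exactly the one-sided Oseledets slow subspace $V_2(\omega)$, a \emph{nontrivial} random proper subspace. The theorem is a statement with the quantifiers ``for all $x$, for a.e.\ $\omega$''; the full-measure set genuinely depends on $x$, and what one must show is $\Prob_\mu[x\in W(\omega)]=0$ for each \emph{fixed} $x\ne0$, not $W(\omega)=\{0\}$ a.s. Correspondingly, the contradiction you try to derive does not exist: the pushforward of $\Prob_\mu$ under $\omega\mapsto W(\omega)$ is indeed a stationary measure on a Grassmannian, but stationary measures on Grassmannians always exist (Krylov--Bogolyubov) and their supports are closed, typically infinite, invariant families of subspaces --- the Furstenberg measure itself is one such object on $Gr(1,d)$. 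Irreducibility only forbids a single \emph{deterministic} invariant subspace (and strong irreducibility a finite invariant union), so no contradiction follows. The standard repair is Furstenberg--Kifer's: work with the deterministic function $\beta(x)=\limsup_n\tfrac1n\mathbb{E}_\mu[\ln\Vert A_\omega^n x\Vert]$, show its sublevel sets are deterministic subspaces invariant under $\mathrm{supp}\,\mu$ (hence trivial by irreducibility, so $\beta\equiv\lambda$ on $\mathbb{S}^{d-1}$), and then prove separately that $\tfrac1n\ln\Vert A_\omega^n x\Vert\to\beta(x)$ almost surely for each fixed $x$.
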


We would like to work in a setup where $\lambda\ne \lambda_2$ as this will impose strong mixing properties on the process $(\theta_n)$. We state this formally in the following theorem due to Guivarc'h and Raugi.

\begin{theorem}{\cite{guivarc1985frontiere}}
    Suppose that the probability measure $\mu$ is contracting and strongly irreducible. Then $\lambda>\lambda_2$ and there exists a unique $\mu$-invariant probability measure $\nu$ on $\mathbb{S}^{d-1}$.
    \end{theorem}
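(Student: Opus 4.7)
The plan is to split the theorem into three steps: existence of a $\mu$-invariant probability measure on $\mathbf{P}(\mathbb{R}^d)$, uniqueness, and the Lyapunov gap $\lambda>\lambda_2$. Since this is a classical result of Guivarc'h and Raugi \cite{guivarc1985frontiere}, my goal is to outline the standard strategy (see also \cite{RM_Book}) rather than to produce an independent argument. For existence, observe that the Markov operator
\[
P_\mu f(\bar x) := \int_S f\bigl(\overline{Ax}\bigr)\, d\mu(A)
\]
sends $C(\mathbf{P}(\mathbb{R}^d))$ into itself and, since $\mathbf{P}(\mathbb{R}^d)$ is compact, Krylov--Bogolyubov applied to the Cesàro averages of an arbitrary initial probability yields at least one $\mu$-invariant $\nu$.

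The central tool for uniqueness is the Furstenberg contraction principle: under strong irreducibility and proximality, for $\Prob_\mu$-a.e.\ $\omega$ and any $\mu$-invariant $\nu$, the pushforward $M_n := (A_\omega^n)_\ast \nu$ converges weakly to a Dirac mass $\delta_{Z(\omega)}$ at a random direction $Z(\omega)\in\mathbf{P}(\mathbb{R}^d)$. I would obtain this in two stages. First, $(M_n)$ is a measure-valued martingale on a compact space, so it converges almost surely to a random limit $M_\infty$; strong irreducibility prevents $M_\infty$ from being supported on a finite union of proper projective subspaces. Second, using the ergodic theorem on $(\Omega,\Prob_\mu)$ together with Borel--Cantelli, one shows that $\Prob_\mu$-almost every trajectory eventually absorbs a matrix product with a simple dominant eigenvalue; the projective action of such a product squeezes everything outside a hyperplane onto its dominant eigendirection, forcing $M_\infty = \delta_{Z(\omega)}$. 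Uniqueness of $\nu$ then follows by integration: $\nu = \mathbb{E}_\mu[\delta_{Z(\omega)}]$ is determined by the law of $Z$, which depends only on $\mu$.

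For the Lyapunov gap $\lambda>\lambda_2$, I would lift the action to $\Lambda^2\mathbb{R}^d$, whose top Lyapunov exponent equals $\lambda+\lambda_2$, and combine the Furstenberg integral formula
\[
\lambda = \int_S \int_{\mathbf{P}(\mathbb{R}^d)} \log \frac{\|Ax\|}{\|x\|}\, d\nu(\bar x)\, d\mu(A)
\]
with the analogous identity on the Grassmannian $\mathrm{Gr}(2,d)$. Strong irreducibility and proximality are inherited by the $\Lambda^2$-action (after restricting to an appropriate irreducible component), so the argument of the preceding paragraph produces a unique stationary measure on $\mathrm{Gr}(2,d)$, and comparing the two integral formulas yields the strict inequality.

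The main obstacle is the contraction principle itself, because it requires delicate input from both hypotheses simultaneously: irreducibility is needed to rule out degenerate limits supported on proper subvarieties, while proximality is what actually produces Dirac concentration. Justifying the almost sure appearance of a contracting proximal product along typical sample paths -- and verifying that the martingale limit $M_\infty$ cannot persist on a finite union of projective hyperplanes -- is the most technical part of the argument, and is where the semigroup structure of $\mathrm{supp}(\mu)$ enters in an essential way.
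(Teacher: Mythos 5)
The paper does not prove this statement: it is quoted verbatim as a known theorem of Guivarc'h and Raugi, with \cite{guivarc1985frontiere} (see also \cite[Ch.~III]{RM_Book}) serving as the proof. So there is no internal argument to compare against; what you have written is a sketch of the classical external proof, and as such it follows the standard route: Krylov--Bogolyubov for existence, the measure-valued martingale $(A_\omega^n)_*\nu \to \delta_{Z(\omega)}$ for uniqueness, and exterior powers for the gap. The existence and uniqueness parts are essentially the right outline, though you should make explicit the key lemma that under strong irreducibility every stationary measure gives zero mass to every projective hyperplane (properness); this is what lets the rank-one limit of $A_\omega^n/\|A_\omega^n\|$ collapse $(A_\omega^n)_*\nu$ to a Dirac mass \emph{independently of} $\nu$, which is the actual source of uniqueness, and it is stronger than merely ruling out support on a finite union of subspaces.

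The one step that would fail as written is the Lyapunov gap. You assert that strong irreducibility and proximality are ``inherited by the $\Lambda^2$-action (after restricting to an appropriate irreducible component)''; this is false in general (e.g.\ the $\Lambda^2$ representation can be reducible or non-proximal even when the original action is i-p), and the classical proof does not need it. The standard argument instead writes $\lambda_2-\lambda=\lim_n \frac1n\mathbb{E}\log\bigl(\tilde\sigma_2(A_\omega^n)/\tilde\sigma_1(A_\omega^n)\bigr)$ using $\|\Lambda^2 M\|=\tilde\sigma_1(M)\tilde\sigma_2(M)$, shows via the contraction property that $A_\omega^n/\|A_\omega^n\|$ converges a.s.\ to rank-one matrices so the singular-value ratio tends to $0$ a.s., and then upgrades this to strict negativity of the limit by a Furstenberg-type integral formula on the flag manifold together with properness of the stationary measure. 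Replace the ``inherited i-p for $\Lambda^2$'' claim with this argument (or cite \cite[Theorem~III.6.1]{RM_Book} for it) and the sketch is a faithful account of the cited proof.
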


In addition to the assumptions of strong irreducibility and proximality, we also impose that the supports of the measures $\mu$ are contained in a  compact subset $S$ of $GL(d,\mathbb{R})$ so the matrix norms are uniformly bounded, i.e. \[ \eta \coloneqq \sup_{A\in S} \max \{ \Vert A\Vert , \Vert A^{-1}\Vert  \} <\infty. \]

The asymptotic variance of the random matrix product $(A_\omega^n x)$ where the matrices are sampled from a measure $\mu$ is
\[  \sigma^2 \coloneqq \lim_{n\to \infty} \frac{1}{n} \mathbb{E} [ (\ln \Vert A_\omega^n\Vert  - \lambda n)^2 ].  \]
By the central limit theorem for random matrix products , see \cite[Theorem V.5.1]{RM_Book}, since $S$ is proximal, strongly irreducible and compact (more generally the matrices satisfy finite exponential moments), we have $\sigma^2>0$.

We also define the Lyapunov Moment function (see \cite{ArnoldStabFormula,baxendale1998stability})) of the random matrix product:
\begin{equation}\label{momlf}
	\Lambda(q) \coloneqq \lim_{n\to\infty} \frac{1}{n}\ln  \left(\mathbb{E} \left[ e^{q \ln \Vert A_\omega^n v\Vert} \right]\right), \quad\text{for } v\in \mathbb S^{d-1}, q\in \mathbb{C} 
\end{equation}
where the limit does not depend on $v$. Global existence has been recently shown in \cite{MR3945748}. Computations (see Lemma \ref{LyapProps} below) show that $\lambda = \Lambda'(0)$ and $\sigma^2 = \Lambda''(0)$. In general, we refer to $\Lambda^{(k)}(0)$ (the $k^{th}$ derivative at zero) as the $k^{th}$ Lyapunov moment of the random matrix product.\\

Denote by $P(S)$ the metric space of probability measures on $S\subset GL(d,\mathbb{R})$ under the total variation distance \[ \Vert \mu_1-\mu_2 \Vert_{TV} = 2 \sup_{B\subset S} |\mu_1(B)-\mu_2(B)| = \sup_{f\in B_b(S): ||f||_\infty =1} \left| \int_S f(x) d\mu_1(x) - \int_S f(x)d\mu_2(x) \right| \]
where $B_b(S)$ denotes the set of bounded measurable functions on $S$.
We will also consider the Banach space of finite complex measures $M(S)$. In this context, the total variation norm is defined as 
\[ \| \mu \|_{TV} := |\mu|(S) = \sup_{f\in B_b(S): \| f\|_\infty\leq 1} \left| \int_S f(x)d\mu(x)  \right|.  \]

For a measure $\mu\in M(S)$, we write $\lambda_{\mu}$, $\Prob_\mu$, $\mathbb{E}_\mu$, $\nu_\mu$, $\Lambda_\mu(q)$ and $\sigma^2_\mu$ for the induced Lyapunov exponent, product measure, expectation, unique stationary measure, asymptotic variance and Lyapunov moment function respectively.\\

We can now state the main result of the paper. We prove that the Lyapunov moments of the random matrix product and the stationary probability measure on $\mathbb{S}^{d-1}$ vary analytically in the measure $\mu$.

\begin{theorem}\label{MainTheorem1Analyticity}
    Let $S\subset GL(d,\mathbb{R})$ be compact, and let $\mu_0\in P(S)$ such that the support of $\mu_0$ is strongly irreducible and proximal. Then, there exist open neighbourhoods $U_\mu\subset M(S)$ of $\mu_0$ and $U_q\subset\mathbb{C}$ of $0$ such that:
    \begin{enumerate}
        \item The map \[ (q,\mu) \mapsto \Lambda_\mu(q) \]
    is analytic on $U_q\times U_\mu$.
    \item For all $k\in \mathbb{N}$, the map \[ \mu\mapsto \Lambda^{(k)}_\mu (0) \]
    is analytic on $U_\mu$. In particular, $\lambda_{\mu}=\Lambda'_\mu(0)$ and $\sigma^2_\mu=\Lambda''_\mu(0)$ are analytic in $\mu$.
    \item The map \[ \mu \mapsto \nu_\mu \] is analytic on $U_\mu$.
    \end{enumerate}
\end{theorem}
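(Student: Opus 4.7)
The plan is to realise $\Lambda_\mu(q)$ and $\nu_\mu$ as spectral data of a single \emph{twisted Koopman operator}
\[
	(L_{\mu,q}f)(x)\;=\;\int_S \|Ax\|^{q}\, f\!\left(\tfrac{Ax}{\|Ax\|}\right)\,d\mu(A),\qquad x\in \mathbf{P}(\mathbb{R}^d),\;q\in\mathbb{C},\;\mu\in M(S),
\]
and to apply Kato's analytic perturbation theory \cite{MR203473} to the family $(\mu,q)\mapsto L_{\mu,q}$. An easy induction on $n$ gives $(L_{\mu,q}^n \mathbf{1})(x)=\mathbb{E}_\mu\!\left[e^{q\ln\|A_\omega^n x\|}\right]$, so by \eqref{momlf} the Lyapunov moment function is the logarithm of the leading eigenvalue of $L_{\mu,q}$ whenever the latter is simple and dominant.

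For the reference point $(\mu_0,0)$, the classical Le Page--Guivarc'h--Raugi theory says that $L_{\mu_0,0}$ acts as a \emph{quasi-compact} operator on a suitable Banach space $\mathcal{B}$ of Hölder functions on $\mathbf{P}(\mathbb{R}^d)$, with a simple isolated leading eigenvalue $1$, eigenfunction the constant $\mathbf{1}$, and left eigenvector equal to the Furstenberg stationary measure $\nu_{\mu_0}$; the rest of the spectrum lies in a strictly smaller disc. I would verify next that $(\mu,q)\mapsto L_{\mu,q}$ extends to a jointly analytic map from a neighbourhood of $(\mu_0,0)$ in the Banach space $M(S)\oplus\mathbb{C}$ into $\mathcal{L}(\mathcal{B})$: linearity in $\mu$ is immediate and, since compactness of $S$ gives $|\ln\|Ax\|\,|\leq \ln\eta$ uniformly, the series
\[
	L_{\mu,q}\;=\;\sum_{k\geq 0}\frac{q^k}{k!}\,L_{\mu,k}^{\log},\qquad (L_{\mu,k}^{\log}f)(x)\,:=\,\int_S (\ln\|Ax\|)^k\, f\!\left(\tfrac{Ax}{\|Ax\|}\right)d\mu(A),
\]
converges in operator norm, provided each coefficient is bounded on $\mathcal{B}$; the latter reduces to Hölder estimates for $A\mapsto \ln\|A\cdot\|$ using the two-sided bound $\eta^{-1}\leq\|Ax\|\leq\eta$.

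By Kato's theorem, there are neighbourhoods $U_\mu\subset M(S)$ of $\mu_0$ and $U_q\subset\mathbb{C}$ of $0$ and analytic maps $\rho:U_\mu\times U_q\to\mathbb{C}$, $h:U_\mu\times U_q\to\mathcal{B}$, $\ell:U_\mu\times U_q\to\mathcal{B}^*$ assigning to $(\mu,q)$ the simple leading eigenvalue and its right and left eigenvectors, with $\rho(\mu_0,0)=1$. Setting $\Lambda_\mu(q):=\ln\rho(\mu,q)$ yields an analytic function that agrees with \eqref{momlf} via a spectral-decomposition computation of $L_{\mu,q}^n\mathbf{1}$, proving (1). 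Part (2) follows because the coefficients $\Lambda^{(k)}_\mu(0)$, extracted from $\Lambda_\mu(q)$ by a Cauchy contour integral in $q$, inherit analyticity in $\mu$ from joint analyticity in $(\mu,q)$. For (3), specialising to $q=0$, the left eigenvector $\ell_{\mu,0}$ is analytic in $\mu$, and after the normalisation $\ell_{\mu,0}(\mathbf{1})=1$ the positivity of $L_{\mu,0}$ forces it to be represented by the probability measure $\nu_\mu$.

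The principal obstacle is the second step: the Banach space $\mathcal{B}$ must be chosen so that it simultaneously (a) contains $\mathbf{1}$, (b) exhibits the Le Page spectral gap at $(\mu_0,0)$, and (c) makes $(\mu,q)\mapsto L_{\mu,q}$ jointly analytic with operator-norm bounds uniform over complex measures $\mu$ near $\mu_0$. Verifying the Hölder estimates on $x\mapsto \|Ax\|^q f(Ax/\|Ax\|)$ uniformly in such $\mu$, and identifying the abstract dual eigenvector $\ell_{\mu,0}\in\mathcal{B}^*$ with an honest probability measure on $\mathbf{P}(\mathbb{R}^d)$ for conclusion (3), are expected to be the most technical parts.
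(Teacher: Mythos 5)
Your proposal follows essentially the same route as the paper: joint analyticity of the twisted Koopman operator on a H\"older space in $(q,\mu)$ (linearity in $\mu$ plus a power series in $q$), the Le Page--Guivarc'h--Raugi spectral gap at $(\mu_0,0)$, and Kato-type analytic perturbation of the simple isolated eigenvalue and its right/left eigenvectors, with $\Lambda_\mu(q)$ the logarithm of the leading eigenvalue and $\nu_\mu$ the left eigenvector at $q=0$. The only caveat worth noting is that Kato's Theorem VII.1.7 is stated for finite-dimensional parameters, so the eigenvalue/eigenprojection perturbation step must be re-proved for a Banach-space parameter (via the Riesz projection $\frac{1}{2\pi i}\int_\Gamma(\xi I-L_{\mu,q})^{-1}d\xi$), which is exactly what the paper does.
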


We explain the notion of analyticity used in Theorem \ref{MainTheorem1Analyticity}. We prove Fréchet holomorphicity (definition in Section \ref{Prelims_Section}) of these maps defined on open subsets of the Banach spaces $\mathbb{C}\times M(S)$ for (1) and $M(S)$ for (2) and (3). The set $M_0(S) = \{\mu\in M(S): \mu(S)=0\}$ is a closed linear subspace of $M(S)$, hence also a Banach space. We can then transfer the holomorphic structure of $M_0(S)$ to the affine subspace of complex measures with unit total mass \[M_1(S) = M_0(S)+\mu_0 = \{\mu\in M(S): \mu(S)=1\}\]
since $\mu_0 \in P(S)$ (see \cite[Section 2.1]{amorim2025analiticity}).\\

In the case of (3), we prove Fréchet holomorphicity of the function $\nu: M(S)\to (\mathcal{C}^\alpha (\mathbb{S}^{d-1}))^*$ (the dual space of the Banach space of $\alpha-$Hölder continuous functions on the unit sphere) that sends a measure $\mu$ to the left eigenvector of the transfer operator $\mathcal{P}_{0,\mu}$ defined in Section \ref{Prelims_Section} for the eigenvalue $1$, i.e. $\nu_\mu \mathcal{P}_{0,\mu} = \nu_\mu $. Indeed the linear functional $\nu_\mu$ need not be a measure, it only represents the unique stationary measure on the projective space when $\mu$ is a strongly irreducible and proximal probability measure.\\

We outline the main ideas of the proof. We first start by presenting some tools we need from the theory linear operators on Banach spaces in Section \ref{Prelims_Section}. Analyticity of the transfer operator is established in Section \ref{Analyticity_of_Koopman_Section} and followed by perturbation theory arguments needed for the proof of Theorem \ref{MainTheorem1Analyticity} in Section \ref{Proof_Of_Main_Section}.

\section{Preliminaries}\label{Prelims_Section}
For $\alpha \in (0,1)$, let $\mathcal{C}^\alpha(\mathbb{S}^{d-1})$ denote the Banach space of $\alpha$-Hölder continuous functions $f: \mathbb{S}^{d-1}\to \mathbb{C}$ endowed with the norm $ \Vert f\Vert _{\mathcal{C}^\alpha} = \Vert f\Vert _\infty + [f]_\alpha $ where 
\[ [f]_\alpha = \sup_{x\ne y} \frac{|f(x)-f(y)|}{\Vert x-y\Vert ^\alpha}. \]
Let $\mathcal{L}(\mathcal{C}^\alpha(\mathbb S^{d-1})$ denote the space of bounded linear operators from $\mathcal C^\alpha(\mathbb S^{d-1})$ to itself and denote by $(\mathcal{C}^\alpha(\mathbb S^{d-1}))^*$ its dual space. 
For $q\in \mathbb{C}$ and $\mu\in P(S)$, one of the operators of interest is the \textit{$q$-twisted Koopman operator} $\mathcal{P}_{q,\mu} \in \mathcal{L}( \mathcal{C}^\alpha(\mathbb{S}^{d-1}) )$ defined by \[ \mathcal{P}_{q,\mu} f (v) = \int_{S}   e^{q \ln \Vert A v\Vert} \  f \left( \frac{A v}{\Vert A v\Vert }  \right)  d\mu(A), \ \ \ \text{for } f\in \mathcal{C}^\alpha(\mathbb{S}^{d-1}), v\in \mathbb{S}^{d-1}. \]

For $f\in C^\alpha(\mathbb{S}^{d-1})$ and $\nu$ a measure in the dual space, we set $\langle f,\nu\rangle=\int_{\mathbb{S}^{d-1}} f \  d\nu$. \\

The result below yields the fundamental connection between the Lyapunov moment function and the twisted Koopman operator. It also establishes a spectral gap for the operator, an essential result for transfering analyticity from the operator to the eigenvalues.

\begin{theorem}\cite[Theorem V.4.3]{RM_Book}\label{Spectral_Gap_Operator}
    Let $S\subset GL(d,\mathbb{R})$ and $\mu\in P(S)$ be a strongly irreducible and proximal measure. There exist $\alpha_0>0$ such that for all $\alpha<\alpha_0$, there exists $q_0>0$ such that for $|q|<q_0$, $\mathcal{P}_{q,\mu}:\mathcal{C}^\alpha(\mathbb S^{d-1})\to \mathcal{C}^\alpha(\mathbb S^{d-1})$ is a bounded linear operator with an isolated simple eigenvalue $e^{\Lambda_\mu(q)}$. 
\end{theorem}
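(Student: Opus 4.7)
The plan is to establish quasi-compactness of the twisted Koopman operator $\mathcal{P}_{q,\mu}$ on $\mathcal{C}^\alpha(\mathbb{S}^{d-1})$ for small $|q|$ and small $\alpha$, and then pin down its leading eigenvalue via perturbation theory, identifying it with $e^{\Lambda_\mu(q)}$.

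First I would treat the unperturbed case $q=0$. The key analytic input is a Doeblin--Fortet (Lasota--Yorke) type inequality of the form
\begin{equation*}
    [\mathcal{P}_{0,\mu}^n f]_\alpha \leq C\rho^n [f]_\alpha + K \Vert f\Vert_\infty,
\end{equation*}
with some $\rho<1$, $C,K>0$, uniformly in $f\in\mathcal{C}^\alpha(\mathbb S^{d-1})$. The contraction rate $\rho$ comes from a quantitative estimate saying that under strong irreducibility and proximality the induced projective process contracts distances on $\mathbb S^{d-1}$ in expectation, i.e.\ $\mathbb{E}_\mu\bigl[\, d(A_\omega^n\!\cdot x, A_\omega^n\!\cdot y)^\alpha/d(x,y)^\alpha \bigr]\leq \rho^n$ for some $\rho<1$, where $A\cdot x := Ax/\Vert Ax\Vert$. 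This exponential contraction rests on the gap $\lambda>\lambda_2$ from Guivarc'h--Raugi and on the compactness of $S$, which keeps the Jacobian of $A\,\cdot$ on $\mathbb S^{d-1}$ uniformly controlled. Combined with the compact embedding $\mathcal{C}^\alpha\hookrightarrow \mathcal{C}^0$ and the boundedness of $\mathcal{P}_{0,\mu}$ on both spaces, the Ionescu-Tulcea--Marinescu theorem then yields quasi-compactness: the essential spectral radius of $\mathcal{P}_{0,\mu}$ is bounded by $\rho<1$ and the spectrum on $\{|z|>\rho\}$ consists of finitely many eigenvalues of finite multiplicity.

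Next I would pin down the peripheral spectrum. The constant function $\mathbf{1}$ is fixed by $\mathcal{P}_{0,\mu}$, so $1$ is an eigenvalue; the Guivarc'h--Raugi theorem already stated guarantees that the $\mu$-stationary probability measure $\nu_\mu$ is unique, providing the corresponding left eigenfunctional. Simplicity of the eigenvalue $1$ follows from uniqueness of $\nu_\mu$ together with a standard argument showing that any $\mathcal{P}_{0,\mu}$-fixed $\mathcal{C}^\alpha$ function must be $\nu_\mu$-a.e.\ and hence everywhere constant. Absence of other peripheral eigenvalues uses proximality: any unimodular $\zeta\neq 1$ in the spectrum would, upon iteration, produce a nontrivial cyclic decomposition of $\mathbb S^{d-1}$ into measurable pieces permuted by the dynamics, contradicting the mixing that proximality forces on $(\theta_n)$.

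Finally I would extend to nonzero $q$. The map $q\mapsto \mathcal{P}_{q,\mu}$ is analytic from $\mathbb C$ into $\mathcal{L}(\mathcal{C}^\alpha(\mathbb S^{d-1}))$ because $e^{q\ln\Vert Av\Vert}$ is entire in $q$ and uniformly bounded, with all its $q$-derivatives, on compacta, using compactness of $S$ and the bound $\eta$ to control the $\alpha$-Hölder norms of both the twist $e^{q\ln\Vert Av\Vert}$ and the composition $f(Av/\Vert Av\Vert)$. Kato's perturbation theorem then guarantees that the simple isolated eigenvalue $1$ persists as an isolated simple eigenvalue $\kappa_\mu(q)$ of $\mathcal{P}_{q,\mu}$, analytic in $q$ on a disk around $0$. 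To identify $\kappa_\mu(q)=e^{\Lambda_\mu(q)}$, iterate the operator against $\mathbf{1}$: $\mathcal{P}_{q,\mu}^n\mathbf{1}(v)=\mathbb{E}_\mu[e^{q\ln\Vert A_\omega^n v\Vert}]$, and use the spectral decomposition $\mathcal{P}_{q,\mu}^n=\kappa_\mu(q)^n\Pi_q + R_q^n$ with spectral radius $r(R_q)<|\kappa_\mu(q)|$ to conclude $\tfrac{1}{n}\log \mathbb{E}_\mu[e^{q\ln\Vert A_\omega^n v\Vert}]\to \log\kappa_\mu(q)$, matching the definition of $\Lambda_\mu(q)$ in \eqref{momlf}. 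The main technical obstacle is the Doeblin--Fortet estimate with a contraction factor uniform over a neighbourhood of $q=0$: one must select $\alpha_0$ and $q_0$ jointly so that the expected projective contraction beats the multiplicative distortion produced by the twist $e^{q\ln\Vert Av\Vert}$, which requires sharp large-deviation control on $\ln\Vert A_\omega^n v\Vert$ coming from proximality and strong irreducibility. Once this inequality is in hand, the rest is a clean application of Ionescu-Tulcea--Marinescu quasi-compactness and Kato-style analytic perturbation theory.
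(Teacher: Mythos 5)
The paper does not prove this statement; it is quoted verbatim from \cite[Theorem V.4.3]{RM_Book}, whose proof (going back to Le Page) is precisely the route you outline: an $\alpha$-H\"older contraction estimate for the projective action yielding a Doeblin--Fortet inequality and Ionescu-Tulcea--Marinescu quasi-compactness at $q=0$, identification of $1$ as the unique simple peripheral eigenvalue, analytic perturbation in $q$, and the cocycle identity $\mathcal{P}_{q,\mu}^n\mathbf{1}(v)=\mathbb{E}_\mu[e^{q\ln\Vert A_\omega^n v\Vert}]$ to identify the leading eigenvalue with $e^{\Lambda_\mu(q)}$. Your proposal is therefore essentially the same approach as the cited source, and the sketch is correct.
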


The goal is to establish analyticity of the eigenvalues and eigenfunctions of $\mathcal{P}_{q,\mu}$ in $q$ and $\mu$ in a neighbourhood of $(0,\mu_0)$. Since the dominant eigenvalue of this operator at $\mu_0\in P(S)$ is $e^{\Lambda_\mu(q)}$, immediate consequences of this result (Corollary \ref{Corr_analyticity} and Corollary \ref{corr_statio}) yield the proof of Theorem \ref{MainTheorem1Analyticity}.\\

We state some properties of the Lyapunov moment function.

\begin{lemma}[\cite{RM_Book}]\label{LyapProps}
    Let $S\subset GL(d,\mathbb{R})$ be compact and $\mu\in P(S)$ be a strongly irreducible and proximal measure. Then
    \begin{enumerate}[]
        \item $\Lambda_\mu(q)$ does not depend on $v\in \mathbb{S}^{d-1}$ for $q<|q_0|$,
        \item $\Lambda_\mu(q) \geq \lambda_{\mu} q$ for $q<|q_0|$,
        \item $\Lambda_\mu (0)=0 $, $  \Lambda'_\mu (0) = \lambda_{\mu}  $ and $ \Lambda_\mu''(0)=\sigma_\mu^2$,
        \item $\Lambda_\mu(q)$ is convex and analytic on $B(0,q_0)$,
    \end{enumerate}
\end{lemma}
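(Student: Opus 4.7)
The strategy is to transfer each item from the spectral picture of Theorem \ref{Spectral_Gap_Operator}, so the first step is the bridge identity
\[\mathcal{P}_{q,\mu}^n \mathbf{1}(v) = \mathbb{E}_\mu\!\left[ e^{q\ln\|A_\omega^n v\|} \right],\]
which I would prove by induction on $n$ using the cocycle $\|A_\omega^n v\|=\|A_\omega^{n-1}(A_1 v/\|A_1 v\|)\|\,\|A_1 v\|$ together with the independence of $A_1$ from $(A_2,\ldots,A_n)$. The spectral decomposition $\mathcal{P}_{q,\mu}^n=e^{n\Lambda_\mu(q)}\Pi_q+R_q^n$, in which $\Pi_q$ is a rank-one projector onto the eigenfunction $h_q$ and the remainder satisfies $e^{-n\Lambda_\mu(q)}\|R_q^n\|_{\mathcal{C}^\alpha}\to 0$, then gives
\[\mathcal{P}_{q,\mu}^n\mathbf{1}(v)=e^{n\Lambda_\mu(q)}\,h_q(v)\,\langle\nu_q,\mathbf{1}\rangle\,\bigl(1+o(1)\bigr).\]
Since $h_0\equiv\mathbf{1}$ (the Markov operator preserves constants) and the corresponding left eigenvector is the Furstenberg measure $\nu_\mu$, continuity in $q$ keeps the prefactor bounded away from $0$ on a neighborhood of $q=0$. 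Taking $\frac{1}{n}\ln$ and letting $n\to\infty$ proves (1) and produces $\Lambda_\mu(0)=0$ in (3).

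For (4), the map $q\mapsto\mathcal{P}_{q,\mu}\in\mathcal{L}(\mathcal{C}^\alpha(\mathbb{S}^{d-1}))$ is entire because the integrand $e^{q\ln\|Av\|}$ depends analytically on $q$, uniformly for $A\in S$ (compact) and $v\in\mathbb{S}^{d-1}$. Kato's perturbation theorem then makes the isolated simple eigenvalue $e^{\Lambda_\mu(q)}$ analytic on $B(0,q_0)$, and its logarithm, which equals $\Lambda_\mu$, is analytic wherever the eigenvalue is non-zero. Convexity comes for free from H\"older's inequality: each $q\mapsto\frac{1}{n}\ln\mathbb{E}_\mu[e^{q\ln\|A_\omega^n v\|}]$ is convex, and this property is preserved under pointwise limits. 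For (2), Jensen's inequality gives $\mathbb{E}_\mu[e^{q\ln\|A_\omega^n v\|}]\ge \exp\!\bigl(q\,\mathbb{E}_\mu[\ln\|A_\omega^n v\|]\bigr)$; after taking $\frac{1}{n}\ln$ and passing to the limit, Furstenberg--Kesten yields $\Lambda_\mu(q)\ge q\lambda_\mu$.

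The remaining derivative identities in (3) would be read off directly from Kato's perturbative expansion of the top eigenvalue. With $h_0=\mathbf{1}$ and the normalization $\langle\nu_\mu,\mathbf{1}\rangle=1$, the first-order formula gives
\[\Lambda_\mu'(0)=\langle\nu_\mu,\partial_q\mathcal{P}_{0,\mu}\mathbf{1}\rangle=\int_{\mathbb{S}^{d-1}}\!\!\int_S \ln\|Av\|\,d\mu(A)\,d\nu_\mu(v)=\lambda_\mu,\]
which is precisely Furstenberg's integral formula. The identification $\Lambda_\mu''(0)=\sigma_\mu^2$ would come from the second-order term of the same expansion, matched with $\sigma_\mu^2=\lim_n\frac{1}{n}\mathbb{E}_\mu[(\ln\|A_\omega^n\|-\lambda_\mu n)^2]$ via the two bounds $\mathbb{E}_\mu[\ln\|A_\omega^n\|]-\lambda_\mu n=O(1)$ (a coboundary argument using the spectral gap at $q=0$) and $|\ln\|A_\omega^n\|-\ln\|A_\omega^n v\||=O(1)$ (compactness of $S$ and proximality). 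This last matching step is the main obstacle and the only place where one has to leave the purely spectral picture; everything else falls out of the spectral gap and Kato's theorem.
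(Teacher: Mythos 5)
The paper gives no proof of this lemma; it is quoted directly from \cite{RM_Book}, so the only fair comparison is with the standard argument in that reference, which your sketch reproduces: the identity $\mathcal{P}_{q,\mu}^n\mathbf{1}(v)=\mathbb{E}_\mu[e^{q\ln\|A_\omega^n v\|}]$, the spectral gap of Theorem \ref{Spectral_Gap_Operator}, Kato perturbation for analyticity and the derivative formulas, H\"older/Jensen for convexity and the bound $\Lambda_\mu(q)\ge q\lambda_\mu$. This is the right route and the outline is essentially correct. One imprecision worth fixing in the variance step: the claim $|\ln\|A_\omega^n\|-\ln\|A_\omega^n v\||=O(1)$ is not a pathwise-uniform consequence of compactness alone; what strong irreducibility and proximality actually give (and what the matching of $\Lambda_\mu''(0)$ with $\sigma_\mu^2$ requires) is that this difference is bounded in $L^2(\Prob_\mu)$ uniformly in $n$, equivalently that $\frac{1}{n}\mathbb{E}_\mu\bigl[(\ln\|A_\omega^n\|-\ln\|A_\omega^n v\|)^2\bigr]\to 0$, so that the CLT normalizations for $\ln\|A_\omega^n\|$ and $\ln\|A_\omega^n v\|$ coincide.
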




where $B(0,q_0)$ denotes the open ball of radius $q_0$ in $\mathbb{C}$. Also note that, by \cite[Section~2.1]{LDP1}, the operator $\mathcal{P}_{q,\mu}$ has a unique probability eigenmeasure $\nu_{q,\mu}$ on $\mathbb{S}^{d-1}$ (the eigenvector of the dual operator $P^*_{q,\mu}$) corresponding to the eigenvalue $e^{\Lambda_\mu(-q)}$. At $q=0$ this coincides with the Furstenberg stationary measure $\nu_\mu$. It need not be a probability measure when $\mu\in M(S)$ is not a probability measure. From now on, when discussing the (dominant) eigenfunction of the Koopman operator, we refer to the unique normalised eigenfunction, with $\langle \nu_{q,\mu}, \phi_{q,\mu}\rangle =1 .$\\

We state a technical lemma allowing us to control the $\mathcal{C}^\alpha$ norm of the operator. Recall that $\eta = \sup_{A\in S} \max\{ \|A\|,\|A^{-1}\| \} $.

\begin{lemma}\cite[Lemma V.4.2]{RM_Book}\label{alpha_norm_controls}
    For all $q\in \mathbb{C}$ and $0<\alpha\leq 1$, there exist constants $C_1,C_2>0$ such that for all $A\in GL(d,\mathbb{R})$, we have
    \[ \sup_{x\ne y} \frac{ \left| e^{q \ln\|Ax\|}-e^{q\ln\|Ay\|} \right|}{\|x-y\|^\alpha} \leq C_1 \eta^{( (1+\alpha) |Re(q)|+2\alpha )} . \]
    
\end{lemma}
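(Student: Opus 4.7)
The plan is to combine a uniform sup bound with a Lipschitz bound on the map $v\mapsto e^{q\ln\|Av\|}$ and then apply the standard interpolation that produces a Hölder estimate from these two. First, since $v\in \mathbb{S}^{d-1}$ and $\max(\|A\|,\|A^{-1}\|)\leq \eta$, we have $\|Av\|\in[\eta^{-1},\eta]$, and hence $|e^{q\ln\|Av\|}|=\|Av\|^{\mathrm{Re}(q)}\leq \eta^{|\mathrm{Re}(q)|}$ uniformly in $v$. In particular, the numerator of the displayed quotient is always at most $2\eta^{|\mathrm{Re}(q)|}$.

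For the Lipschitz side I would apply the mean value theorem to the complex-valued function $s\mapsto s^q$ on the real segment between $a=\|Ax\|$ and $b=\|Ay\|$, obtaining
\[
|a^q-b^q| \leq |q|\,\sup_{c\in[\min(a,b),\max(a,b)]} c^{\mathrm{Re}(q)-1}\,|a-b| \leq |q|\,\eta^{|\mathrm{Re}(q)|+1}\,\bigl|\|Ax\|-\|Ay\|\bigr|,
\]
using the monotonicity of $c\mapsto c^{\mathrm{Re}(q)-1}$ on $[\eta^{-1},\eta]$ together with $|\mathrm{Re}(q)-1|\leq |\mathrm{Re}(q)|+1$. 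Combined with $|\|Ax\|-\|Ay\||\leq\|A\|\|x-y\|\leq\eta\|x-y\|$, this yields a Lipschitz constant $L:=|q|\eta^{|\mathrm{Re}(q)|+2}$.

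Finally, any non-negative quantity $F$ satisfying $F\leq M$ and $F\leq Lr$ also satisfies $F\leq M^{1-\alpha}L^\alpha r^\alpha$ for every $\alpha\in(0,1]$. Applying this with $F=|e^{q\ln\|Ax\|}-e^{q\ln\|Ay\|}|$, $M=2\eta^{|\mathrm{Re}(q)|}$, $L$ as above, and $r=\|x-y\|$, one obtains
\[
\frac{|e^{q\ln\|Ax\|}-e^{q\ln\|Ay\|}|}{\|x-y\|^\alpha}\leq 2^{1-\alpha}|q|^\alpha\,\eta^{(1-\alpha)|\mathrm{Re}(q)|+\alpha(|\mathrm{Re}(q)|+2)} = 2^{1-\alpha}|q|^\alpha\,\eta^{|\mathrm{Re}(q)|+2\alpha}.
\]
Since $\|A\|\|A^{-1}\|\geq 1$ forces $\eta\geq 1$, we may absorb an extra factor $\eta^{\alpha|\mathrm{Re}(q)|}\geq 1$ and dominate the right-hand side by $C_1\,\eta^{(1+\alpha)|\mathrm{Re}(q)|+2\alpha}$ with $C_1=2^{1-\alpha}|q|^\alpha$, as required. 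The only mildly delicate point is the bookkeeping of the three exponents of $\eta$ — one from the sup bound, one from $\sup_c c^{\mathrm{Re}(q)-1}$, and one from the operator norm $\|A\|$ — and no conceptual obstacle is expected since, once these are tracked, the statement reduces to a routine sup–Lipschitz interpolation.
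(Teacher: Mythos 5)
Your argument is correct: the sup bound $|e^{q\ln\|Av\|}|\leq\eta^{|\mathrm{Re}(q)|}$, the mean value inequality giving the Lipschitz constant $|q|\eta^{|\mathrm{Re}(q)|+2}$, and the interpolation $F\leq M^{1-\alpha}(Lr)^{\alpha}$ are all valid, and the exponent bookkeeping checks out (the stated exponent $(1+\alpha)|\mathrm{Re}(q)|+2\alpha$ is then reached by absorbing $\eta^{\alpha|\mathrm{Re}(q)|}\geq 1$). The paper does not reproduce a proof but simply cites \cite[Lemma V.4.2]{RM_Book}, and your sup--Lipschitz interpolation is essentially the standard argument given there, so there is nothing to flag.
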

\vspace{0.1in}







In order to prove analyticity of the eigenvalues and eigenfunctions, we begin by showing that the operator $\mathcal{P}_{q,\mu}$ is itself jointly analytic in $q$ and $\mu$. To the best of our knowledge, this result is an original perspective on questions of analyticity of the Lyapunov exponent. Following this, we prove a transfer result (Proposition \ref{EigenAnalytic}) inspired by Kato's perturbation theory (\cite{MR203473}) that yields the desired result in our context. This transfer result relies on the operator $\mathcal{P}_{q,\mu}$ exhibiting a spectral gap as stated in Theorem \ref{Spectral_Gap_Operator}.\\

We recall the definition of Fréchet analyticity (holomorphicity) in the context of Banach spaces following \cite{chae2020holomorphy}. It generalizes the existence of a power series expansion on $\mathbb{C}$.
In the following, let $E$ and $F$ be complex Banach spaces and $U\subset E$ an open set.

\begin{definition}
     The map $T: U\to F$ is said to be \textit{Fréchet holomorphic} (or Fréchet analytic) at $x_0\in U$ if there exist $r>0$ and a sequence of continuous, symmetric $n$-linear maps $T_n: E^n\to F$ such that 
    \[ T(x) = \sum_{n=0}^\infty T_n (x-x_0,\dots,x-x_0) \]
    for all $x\in B(x_0,r)$, where the series above converges in the norm of $F$. $f$ is said to be Fréchet holomorphic on $U$ if it is Fréchet holomorphic at every $x_0\in U$.
\end{definition}
We typically refer to Fréchet holomorphicity when simply writing holomorphic or analytic in this paper. A weaker form of analyticity is Gâteaux analyticity which can be thought of as analyticity along lines:

\begin{definition}
    A map $T: U \to F$ is said to be \textit{Gâteaux holomorphic} (or Gâteaux analytic) if for every $x_0\in U$ and $v\in E$, the map \[z\mapsto T(x_0+zv)\]
    is complx holomorphic on $\tilde{U}:= \{z\in \mathbb{C}: x_0+zv\in U\}$.
\end{definition}

In practice, a linear operator $T_x$ on a function space $X$ is Fréchet (Gâteaux) holomorphic in $x$ at $x_0$ if and only if the map $x\mapsto T_xf$ is Fréchet (Gâteaux) holomorphic at $x_0$ for all $f\in X$ and there exist $c>0$ and an open neighbourhood $W$ containing $x_0$ such that \[ \sup_{x\in W} \|T_x\| <c \]
(local boundedness). We show local boundedness for our operator later, so it is sufficient to establish analyticity of $\mathcal{P}_{q,\mu}f$ for any $f\in \mathcal{C}^\alpha(\mathbb{S}^{d-1})$.
The following result allows us to verify Fréchet-analyticity through simpler criteria:
\begin{proposition}{\cite[Theorem 8.7]{mujica2010complex}}  \label{GateauToFrechet}
$T$ is Fréchet holormorphic on $U$ if and only if $T$ is Gâteaux holomorphic and continuous on $U$.
\end{proposition}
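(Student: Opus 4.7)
The forward implication is essentially immediate: if $T$ is Fréchet holomorphic at $x_0$, the defining series converges uniformly on a neighbourhood (each term being a continuous homogeneous polynomial), yielding continuity, and its restriction to an affine line $z\mapsto T(x_0+zv)$ is a convergent complex power series in $z$, which is classical one-variable holomorphicity.

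For the converse, fix $x_0\in U$ and use continuity to choose $r>0$ and $M<\infty$ with $\overline{B(x_0,2r)}\subset U$ and $\sup_{y\in\overline{B(x_0,2r)}}\|T(y)\|\leq M$. For each $v\in E$ with $\|v\|\leq r$, the Gâteaux hypothesis says $\varphi_v(z):=T(x_0+zv)$ is a classical $F$-valued holomorphic function on the disk $\{|z|<2\}$, and the one-dimensional Cauchy integral formula (interpreted in the Bochner sense for $F$-valued integrands) gives
\[
T(x_0+v)=\varphi_v(1)=\sum_{n=0}^\infty P_n(v),\qquad P_n(v):=\frac{1}{2\pi i}\oint_{|\zeta|=3/2}\frac{T(x_0+\zeta v)}{\zeta^{n+1}}\,d\zeta.
\]
The change of variable $\zeta\mapsto \lambda\zeta$ shows $P_n$ is homogeneous of degree $n$, and the Cauchy estimate $\|P_n(v)\|\leq M(2/3)^n$ on $\|v\|\leq r$ yields uniform convergence of the series on $B(x_0,r)$.

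It remains to realise each $P_n$ as the diagonal of a continuous symmetric $n$-linear map $T_n:E^n\to F$. The natural candidate is the polarization
\[
T_n(v_1,\ldots,v_n):=\frac{1}{2^n n!}\sum_{\varepsilon_1,\ldots,\varepsilon_n=\pm 1}\varepsilon_1\cdots\varepsilon_n\, P_n(\varepsilon_1 v_1+\cdots+\varepsilon_n v_n),
\]
from which symmetry, continuity and the identity $T_n(v,\ldots,v)=P_n(v)$ are immediate. Genuine multilinearity in each argument is the substantive point: one considers the locally bounded function $(z_1,\ldots,z_n)\mapsto T(x_0+z_1 v_1+\cdots+z_n v_n)$ on a small polydisk, which is separately holomorphic in each $z_i$ by Gâteaux analyticity. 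A finite-dimensional Hartogs theorem (separate analyticity plus local boundedness implies joint analyticity) then furnishes a convergent joint power series, and extracting the coefficient of $z_1\cdots z_n$ via iterated one-variable Cauchy integrals identifies it with $T_n(v_1,\ldots,v_n)$, giving linearity in each slot.

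\textbf{Main obstacle.} The only delicate step is upgrading the one-variable Cauchy expansion to a genuine $n$-linear form on $E^n$, and this is precisely where the continuity hypothesis is indispensable: separate analyticity alone is insufficient, but the local boundedness inherited from continuity is exactly what the finite-dimensional Hartogs theorem needs to promote separate to joint analyticity on each finite-dimensional slice used in the polarization identity.
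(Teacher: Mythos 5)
The paper offers no proof of this proposition: it is quoted verbatim as Theorem 8.7 of Mujica's \emph{Complex Analysis in Banach Spaces}, so there is nothing internal to compare against. Your argument is, in substance, the standard proof from that reference: local boundedness from continuity, the one-variable Cauchy integral along lines to produce the homogeneous components $P_n$ with geometric Cauchy estimates, and polarization plus the Hartogs/Osgood lemma on finite-dimensional slices to realise each $P_n$ as the diagonal of a continuous symmetric $n$-linear map. The structure is correct and you have correctly identified the one genuinely delicate point.

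One step is stated more tersely than it deserves: identifying $T_n(v_1,\dots,v_n)$ with the coefficient of $z_1\cdots z_n$ does not by itself give additivity in each slot, because that iterated Cauchy integral is not visibly additive in, say, $v_1$. The standard repair is to pass to $n+1$ variables, expand $(z_1,z_1',z_2,\dots,z_n)\mapsto T(x_0+z_1v_1+z_1'v_1'+\cdots)$ jointly (again by separate holomorphy plus local boundedness), and compare the coefficient of $z_1\cdots z_n$ on the diagonal $z_1'=z_1$ with the sum of the two mixed coefficients; homogeneity in each slot follows from rescaling the contour. This is routine and fully within the machinery you set up, so I would count your proof as correct modulo that one-line elaboration. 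A minor caveat on the forward direction: continuity of the sum requires the series to converge uniformly (or at least locally uniformly) near $x_0$, which is how the definition is normally read; pointwise norm convergence alone would not suffice.
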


\section{Analyticity of the Twisted Koopman Operator}\label{Analyticity_of_Koopman_Section}
An important step towards the proof of Theorem \ref{MainTheorem1Analyticity} is showing that the operator $\mathcal{P}_{q,\mu}$ is jointly Fréchet-analytic in both $q$ and $\mu$.

Throughout this section, we implicitly extend the domain $\mu$ to the complex setting (the space of probability measures on $S$ is extended to the space of complex finite measures on $M(S)$) in order to use functional analysis tools to establish holomorphicity. It then follows that the restriction to probability measures is real-analytic. See for instance \cite[Section 3.2]{amorim2025analiticity} for a similar approach to analyticity of $\lambda_\mu$.

\begin{proposition}\label{Joint_Analyticity}
    There exists $q_0>0$ and an open subset $U\subset M(S)$ containing $\mu_0$ such that the map $(q,\mu)\mapsto \mathcal{P}_{q,\mu}$ is Fréchet-analytic on $B(0,q_0)\times U$.
\end{proposition}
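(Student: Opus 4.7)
The plan is to produce a convergent power series expansion of $\mathcal{P}_{q,\mu}$ around $(0,\mu_0)$, directly exhibiting Fréchet analyticity. Two structural facts drive the argument. First, $\mu \mapsto \mathcal{P}_{q,\mu}$ is affine: writing $\mu = \mu_0 + \eta$ with $\eta \in M(S)$,
\[ \mathcal{P}_{q,\mu_0+\eta}f(v) = \mathcal{P}_{q,\mu_0}f(v) + \int_S e^{q\ln\|Av\|}\, f\!\left(\frac{Av}{\|Av\|}\right) d\eta(A). \]
Second, compactness of $S$ yields $|\ln\|Av\|| \leq \ln\eta$ uniformly in $(A,v)$, so the Taylor expansion of $e^{q\ln\|Av\|}$ converges uniformly in $(A,v)$ on compact subsets of $q$. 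Interchanging summation and integration produces
\[ \mathcal{P}_{q,\mu_0+\eta} f \;=\; \sum_{k=0}^\infty \frac{q^k}{k!}\bigl(\mathcal{Q}_{k,\mu_0} + \mathcal{Q}_{k,\eta}\bigr) f, \qquad \mathcal{Q}_{k,\mu}f(v) := \int_S (\ln\|Av\|)^k f\!\left(\frac{Av}{\|Av\|}\right) d\mu(A). \]
Grouping by total degree in $(q,\eta)$ gives $n$-homogeneous polynomials in $(q,\eta)$, which by polarisation furnish the required symmetric $n$-linear forms.

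The key technical step is a polynomial-in-$k$ bound of the form $\|\mathcal{Q}_{k,\mu}\|_{\mathcal{C}^\alpha \to \mathcal{C}^\alpha} \leq C(k+1)(\ln\eta)^k\|\mu\|_{TV}$, with $C$ independent of $k$ and $\mu$. The sup-norm half is immediate from $|\ln\|Av\||\leq\ln\eta$ and the definition of the total variation norm. For the $\alpha$-Hölder seminorm in the spatial variable $v$, I would split
\[ (\ln\|Av\|)^k f(\pi(Av)) - (\ln\|Aw\|)^k f(\pi(Aw)) = \bigl[(\ln\|Av\|)^k - (\ln\|Aw\|)^k\bigr] f(\pi(Av)) + (\ln\|Aw\|)^k\bigl[f(\pi(Av))-f(\pi(Aw))\bigr], \]
where $\pi$ denotes radial projection to $\mathbb{S}^{d-1}$. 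The mean value theorem together with $\inf_v\|Av\| \geq 1/\eta$ controls the first bracket by a constant multiple of $k(\ln\eta)^{k-1}\eta^2\|v-w\|$, while the second is bounded by $[f]_\alpha(2\eta^2)^\alpha\|v-w\|^\alpha$ via the standard estimate $\|\pi(Av) - \pi(Aw)\| \leq 2\eta^2\|v-w\|$. Combined with the factorial $1/k!$, the series converges in operator norm for every $q \in \mathbb{C}$ and $\eta \in M(S)$; the restriction to $|q|<q_0$ and $\mu$ in a neighbourhood of $\mu_0$ appearing in the statement is required only later for spectral-gap purposes, not for the analyticity of the operator itself.

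The main obstacle is precisely the Hölder half of the estimate: passing from sup-norm to $\mathcal{C}^\alpha$ forces control of the spatial variation of $(\ln\|Av\|)^k$, which introduces the linear-in-$k$ prefactor that is then absorbed by $1/k!$. Lemma \ref{alpha_norm_controls} is essentially the $k=1$ specialisation of this split after exponentiation, and a cleaner alternative would be a Cauchy-integral extraction $\mathcal{Q}_{k,\mu} = \frac{k!}{2\pi i}\oint_{|q|=r} \mathcal{P}_{q,\mu}\,q^{-k-1}\,dq$, once local boundedness of $q \mapsto \mathcal{P}_{q,\mu}$ on $\mathcal{C}^\alpha$ is deduced directly from that lemma. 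Either route delivers the required Fréchet analyticity.
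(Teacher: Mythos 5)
Your argument is correct, but it takes a genuinely different route from the paper. The paper proves analyticity \emph{softly}: it observes that $\mu\mapsto\mathcal{P}_{q,\mu}f$ is affine (hence Gâteaux holomorphic), proves Lipschitz continuity $\|\mathcal{P}_{q,\mu_1}-\mathcal{P}_{q,\mu_2}\|\leq C\|\mu_1-\mu_2\|_{TV}$ to upgrade to Fréchet holomorphy via Proposition \ref{GateauToFrechet}, cites the known analyticity in $q$ (Lemma \ref{Analytic_in_q}), and then invokes a Hartogs-type theorem (separate holomorphy plus local boundedness implies joint holomorphy) to conclude. You instead construct the joint power series explicitly: expanding $e^{q\ln\|Av\|}$ and using linearity in the measure gives the $n$-homogeneous terms $T_n(q,\eta)=\frac{q^n}{n!}\mathcal{Q}_{n,\mu_0}+\frac{q^{n-1}}{(n-1)!}\mathcal{Q}_{n-1,\eta}$, and your bound $\|\mathcal{Q}_{k,\mu}\|\leq C(k+1)\max(1,\ln\eta)^k\|\mu\|_{TV}$ (whose Hölder half is obtained by the same splitting that underlies Lemma \ref{alpha_norm_controls}, applied to $(\ln\|Av\|)^k$ rather than to the exponential, together with $\|v-w\|\leq 2^{1-\alpha}\|v-w\|^\alpha$ on the sphere) makes the series converge in operator norm everywhere. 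What your approach buys is explicitness and strength: you get entire dependence on $(q,\mu)\in\mathbb{C}\times M(S)$ with computable Taylor coefficients, and you avoid citing the Gâteaux-to-Fréchet and Hartogs machinery; the cost is that you must redo the $\mathcal{C}^\alpha$ estimate with a $k$-dependent constant instead of quoting the existing lemmas, and you rely on polarisation to pass from homogeneous polynomials to symmetric multilinear maps. Two cosmetic points: you use $\eta$ both for the perturbation $\mu-\mu_0$ and for the matrix-norm bound $\sup_{A\in S}\max\{\|A\|,\|A^{-1}\|\}$ from the paper, which should be disambiguated; and the displayed Cauchy formula in your last paragraph should read $\mathcal{Q}_{k,\mu}=\frac{k!}{2\pi i}\oint_{|q|=r}\mathcal{P}_{q,\mu}\,q^{-k-1}\,dq$ only after noting that this alternative presupposes the analyticity in $q$ you are otherwise establishing by hand.
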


The proof of Proposition \ref{Joint_Analyticity} is split into three steps.
First, we recall in Lemma \ref{Analytic_in_q} that $\mathcal{P}_{q,\mu}$ is analytic in $q$ in a neighbourhood of the origin, a known result from \cite{RM_Book}. We then proceed to show that the operator is analytic in $\mu$ in a neighbourhood of $\mu_0$ via Proposition \ref{GateauToFrechet}. Finally, a result from functional analysis allows us to conclude joint analyticity in $q$ and $\mu$.

\begin{lemma}\cite[Proposition 4.1]{RM_Book}\label{Analytic_in_q}
    Let $\mu$ be a probability measure supported on a subset of a compact set $S\subset GL(d,\mathbb{R})$. For all $\alpha>0$, there exists $q_0>0$ such that the map $T: \mathbb{C}\to L(\mathcal{C}^{\alpha}(\mathbb{S}^{d-1}))$ given by $T(q)=\mathcal{P}_{q,\mu}$ is analytic on $B(0,q_0)$.
\end{lemma}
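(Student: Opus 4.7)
The plan is to establish analyticity by constructing a convergent power series representation in the operator norm on $\mathcal{L}(\mathcal{C}^\alpha(\mathbb{S}^{d-1}))$. Starting from the pointwise expansion $e^{q \ln \|Av\|} = \sum_{n=0}^\infty \frac{q^n (\ln\|Av\|)^n}{n!}$, valid for all $q \in \mathbb{C}$, $A \in S$ and $v \in \mathbb{S}^{d-1}$, I define the coefficient operators
\[ \mathcal{P}_n f(v) := \int_S (\ln \|Av\|)^n \, f\!\left(\frac{Av}{\|Av\|}\right) d\mu(A), \qquad f\in \mathcal{C}^\alpha(\mathbb{S}^{d-1}), \]
and aim to show that $\mathcal{P}_{q,\mu} = \sum_{n\geq 0} \frac{q^n}{n!} \mathcal{P}_n$ with convergence in operator norm on any ball $B(0,q_0) \subset \mathbb{C}$.

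The main step is to bound each $\mathcal{P}_n \in \mathcal{L}(\mathcal{C}^\alpha)$ by $\|\mathcal{P}_n\|_{\mathcal{L}(\mathcal{C}^\alpha)} \leq C(\alpha,\eta)\,n\,(\ln \eta)^n$. Compactness of $S$ gives $|\ln \|Av\|| \leq \ln \eta$ for every $A\in S$ and $v\in\mathbb{S}^{d-1}$, so immediately $\|\mathcal{P}_n f\|_\infty \leq (\ln \eta)^n \|f\|_\infty$. For the Hölder seminorm I would split $\mathcal{P}_n f(x) - \mathcal{P}_n f(y)$ along the two factors inside the integrand and bound each piece using the Lipschitz estimates
\[ |\ln \|Ax\| - \ln \|Ay\|| \leq \eta^2 \|x-y\|, \qquad \bigl\|\tfrac{Ax}{\|Ax\|} - \tfrac{Ay}{\|Ay\|}\bigr\| \leq 2\eta^2 \|x-y\|, \]
both direct consequences of $\|A\|, \|A^{-1}\| \leq \eta$, together with the mean value bound $|a^n - b^n| \leq n\max(|a|,|b|)^{n-1}|a-b|$ applied to $a=\ln\|Ax\|$, $b=\ln\|Ay\|$. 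The at-most-linear-in-$n$ growth is absorbed by the factorial denominator, so $\sum_n \frac{|q|^n}{n!} n (\ln\eta)^n$ converges for every $q \in \mathbb{C}$ and the operator series $\sum_n \frac{q^n}{n!} \mathcal{P}_n$ converges absolutely in $\mathcal{L}(\mathcal{C}^\alpha)$. Finally, dominated convergence (with majorant $\|f\|_\infty \sum_n |q|^n (\ln\eta)^n/n!$) lets one exchange sum and integral pointwise, identifying this sum with $\mathcal{P}_{q,\mu}$ and yielding Fréchet analyticity on $B(0,q_0)$.

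The main technical obstacle is the Hölder seminorm estimate for $\mathcal{P}_n$: it requires the two-term splitting together with the Lipschitz estimates above so that the bound grows only polynomially in $n$ rather than exponentially. A more economical alternative would be to appeal to Proposition \ref{GateauToFrechet}: local boundedness of $q \mapsto \mathcal{P}_{q,\mu}$ in the operator norm follows at once from Lemma \ref{alpha_norm_controls}, while Gâteaux holomorphicity reduces to the classical complex analyticity of scalar integrals $q \mapsto \int_S e^{q\ln\|Av\|} g(A,v) \, d\mu(A)$ with $g(A,v) = f(Av/\|Av\|)$ fixed, which is immediate by differentiation under the integral sign on the compact set $S$.
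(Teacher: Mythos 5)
Your proposal is correct, and it is essentially the standard argument behind the cited result (the paper itself gives no proof, deferring to \cite[Proposition V.4.1]{RM_Book}, which proceeds by exactly this power-series expansion of the twisted kernel with operator-norm bounds on the coefficient operators; compactness of $S$ even gives entirety in $q$, stronger than the stated local claim). The only points worth tidying are cosmetic: the conversion of the Lipschitz bounds to H\"older bounds uses $\|x-y\|\leq 2^{1-\alpha}\|x-y\|^{\alpha}$ on the sphere, and the final estimate should read $\|\mathcal{P}_n\|_{\mathcal{L}(\mathcal{C}^\alpha)}\leq C(\alpha,\eta)\,n\,\max(1,\ln\eta)^{n}$ to cover the case $\ln\eta<1$, neither of which affects convergence of $\sum_n \frac{|q|^n}{n!}\|\mathcal{P}_n\|$.
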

Note that the result in \cite[Proposition 4.1]{RM_Book} is more general: it establishes analyticity when the measure $\mu$ has finite exponential moments. In our context, since the matrix norms are uniformly bounded, we obtain analyticity for any $\alpha>0$.

We now move to the second step: proving analyticity in the measure $\mu$. The following lemma establishes continuity of $\mu \mapsto \mathcal{P}_{q,\mu}$ on $M(S)$, the first ingredient of Holomorphicity according to Proposition \ref{GateauToFrechet}.

\begin{lemma}\label{Bounded_in_mu}
    For any $q\in \mathbb{C}$, the map $T: M(S) \to L(C^\alpha (\mathbb{S}^{d-1}))$ given by $\mu \mapsto \mathcal{P}_{q,\mu}$ is continuous.
\end{lemma}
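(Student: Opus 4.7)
The plan is to establish not merely continuity but Lipschitz continuity of $\mu\mapsto\mathcal{P}_{q,\mu}$ in the operator norm, by exploiting the dual characterisation of the total variation norm. Fix $q\in\mathbb{C}$, $f\in\mathcal{C}^\alpha(\mathbb{S}^{d-1})$ and $v\in\mathbb{S}^{d-1}$, and define the auxiliary function
\[ g_{q,v,f}(A) = e^{q\ln\|Av\|}\, f\!\left(\frac{Av}{\|Av\|}\right),\qquad A\in S. \]
Then by linearity in the measure,
\[ \bigl((\mathcal{P}_{q,\mu_1}-\mathcal{P}_{q,\mu_2})f\bigr)(v) = \int_S g_{q,v,f}(A)\,d(\mu_1-\mu_2)(A), \]
so by definition of the total variation norm,
\[ \bigl|\bigl((\mathcal{P}_{q,\mu_1}-\mathcal{P}_{q,\mu_2})f\bigr)(v)\bigr| \leq \|g_{q,v,f}\|_\infty\,\|\mu_1-\mu_2\|_{TV}. \]
The goal is therefore to show that the map $v\mapsto g_{q,v,f}(\cdot)$ is bounded in the $\mathcal{C}^\alpha$-norm with bound uniform in $A\in S$, yielding at once the sup-norm and Hölder-seminorm estimates for $(\mathcal{P}_{q,\mu_1}-\mathcal{P}_{q,\mu_2})f$.

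For the sup norm, since $S$ is compact and $\eta<\infty$, we have $1/\eta\leq\|Av\|\leq\eta$ for all $A\in S$, $v\in\mathbb{S}^{d-1}$, hence $|e^{q\ln\|Av\|}|\leq\eta^{|\mathrm{Re}(q)|}$, giving $\|g_{q,v,f}\|_\infty\leq\eta^{|\mathrm{Re}(q)|}\|f\|_\infty$. This immediately provides
\[ \sup_v\bigl|\bigl((\mathcal{P}_{q,\mu_1}-\mathcal{P}_{q,\mu_2})f\bigr)(v)\bigr|\leq \eta^{|\mathrm{Re}(q)|}\|f\|_{\mathcal{C}^\alpha}\,\|\mu_1-\mu_2\|_{TV}. \]

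For the Hölder seminorm, the difference $g_{q,v,f}(A)-g_{q,w,f}(A)$ splits into
\[ \bigl(e^{q\ln\|Av\|}-e^{q\ln\|Aw\|}\bigr)f\!\left(\tfrac{Aw}{\|Aw\|}\right) + e^{q\ln\|Av\|}\!\left[f\!\left(\tfrac{Av}{\|Av\|}\right)-f\!\left(\tfrac{Aw}{\|Aw\|}\right)\right]. \]
The first summand is controlled by Lemma \ref{alpha_norm_controls}, which yields a bound of the form $C_1\eta^{(1+\alpha)|\mathrm{Re}(q)|+2\alpha}\|v-w\|^\alpha\|f\|_\infty$ uniformly in $A\in S$. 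For the second summand, one uses the elementary identity
\[ \frac{Av}{\|Av\|}-\frac{Aw}{\|Aw\|} = \frac{A(v-w)}{\|Av\|}+\frac{Aw\bigl(\|Aw\|-\|Av\|\bigr)}{\|Av\|\|Aw\|}, \]
together with the two-sided bound $1/\eta\leq\|A\cdot\|\leq\eta$, to obtain $\bigl\|Av/\|Av\|-Aw/\|Aw\|\bigr\|\leq C(\eta)\|v-w\|$; applying $[f]_\alpha$ then produces a bound of the form $C(\eta)^\alpha\eta^{|\mathrm{Re}(q)|}[f]_\alpha\|v-w\|^\alpha$. Combining the two contributions gives a constant $C(q,\eta,\alpha)$ such that $[g_{q,\cdot,f}(A)]_\alpha\leq C(q,\eta,\alpha)\|f\|_{\mathcal{C}^\alpha}$ uniformly in $A\in S$, and hence $[(\mathcal{P}_{q,\mu_1}-\mathcal{P}_{q,\mu_2})f]_\alpha\leq C(q,\eta,\alpha)\|f\|_{\mathcal{C}^\alpha}\|\mu_1-\mu_2\|_{TV}$.

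Putting these estimates together yields a Lipschitz bound
\[ \|\mathcal{P}_{q,\mu_1}-\mathcal{P}_{q,\mu_2}\|_{L(\mathcal{C}^\alpha(\mathbb{S}^{d-1}))}\leq C'(q,\eta,\alpha)\,\|\mu_1-\mu_2\|_{TV}, \]
from which continuity is immediate. The only place requiring genuine care is the Hölder bound on the projection $v\mapsto Av/\|Av\|$: one must verify that the Lipschitz constant depends only on $\eta$ and not on the individual matrix $A\in S$, which is precisely where compactness of $S$ enters. Everything else reduces to the duality pairing between $B_b(S)$ and $M(S)$ in its total variation norm, applied pointwise in $v$ (and then in the Hölder difference quotient).
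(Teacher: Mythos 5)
Your proof is correct and follows essentially the same route as the paper's: the same splitting of the integrand into an exponential-difference term (controlled by Lemma \ref{alpha_norm_controls}) and an $f$-difference term, combined with total-variation duality to obtain a Lipschitz bound in the operator norm. If anything, you are more careful than the paper in verifying that the projective action $v\mapsto Av/\|Av\|$ is Lipschitz with a constant depending only on $\eta$, a point the paper's displayed estimate absorbs into the final constant without comment.
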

\begin{proof} The integrand of the operator is denoted by $K_{q,A}f(x) = e^{q \ln\|Ax\|} f(Ax/\|Ax\|)$ for $A\in S, q\in \mathbb{C}, f\in\mathcal{C}^\alpha(\mathbb{S}^{d-1}) $. We have
    \[ \left| K_{q,A}f(x) \right| \leq \eta^{|Re(q)|} \Vert f \Vert_\infty . \]
    Similarly, by Lemma \ref{alpha_norm_controls}, for any $x\ne y\in \mathbb{S}^{d-1}$, we have
   \begin{align*} \left| \frac{K_{q,A}f(x)-K_{q,A}f(y)}{\Vert x-y \Vert^\alpha} \right| & \leq \left| e^{z \ln \|Ax\|} \right| \frac{\left| f(Ax/\|Ax\|)-f(Ay/\|Ay\|)  \right|}{\|x-y\|^\alpha} \\ & + \left| f(Ay/\|Ay\|) \right| \frac{ \left| e^{q \ln\|Ax\|}-e^{q\ln\|Ay\|} \right|}{\|x-y\|^\alpha}
   \\ &\leq [f]_\alpha  \eta^{|Re(q)|} + c \Vert f \Vert_{\infty}  \eta^{( (1+\alpha) |Re(q)|+2 )}  \\
   & \leq C ([f]_\alpha+||f||_\infty) = C \|f\|_{\mathcal{C}^\alpha}.
   \end{align*}
for some $C>0$ independent of $A\in S$. This means that 
\[ \sup_{A\in S} \|K_{q,A}\|_{L(\mathcal{C}^\alpha)}\leq C(q,\eta). \]

Let $\mu_1,\mu_2\in M(S)$. For any $f\in \mathcal{C}^\alpha (\mathbb{S}^{d-1}),$ we have 
\[ \mathcal{P}_{q,\mu_1}f- \mathcal{P}_{q,\mu_2}f = \int_S K_{q,A} f \ (\mu_1 - \mu_2)d(A). \]
Taking the $\mathcal{C}^\alpha$ norm, we obtain
\[ \| \mathcal{P}_{q,\mu_1}f- \mathcal{P}_{q,\mu_2}f  \|_{\mathcal{C}^\alpha} \leq \int_S  \| K_{q,A} f \|_{C^\alpha} \ (\mu_1 - \mu_2)d(A) \leq \sup_{A\in S} \| K_{q,A} f \|_{C^\alpha} \|\mu_1-\mu_2\|_{TV}  \]
and hence, 
\[ \| \mathcal{P}_{q,\mu_1}- \mathcal{P}_{q,\mu_2} \|_{L(\mathcal{C}^\alpha)} \leq C(q,\eta) \|\mu_1-\mu_2\|_{TV} \]
which proves continuity on $M(S)$.

\end{proof}

We can now prove analyticity in the measure, and then joint analyticity in $q$ and $\mu$.

\begin{proof}[Proof of Proposition \ref{Joint_Analyticity}]

Let $B\subset M(S)$ be an open neighbourhood and $f\in C^\alpha(\mathbb{S}^{d-1})$. In order to prove that the map $T_f: B\to C^\alpha(\mathbb{S}^{d-1})$ given by $T_f(\mu) = \mathcal{P}_{q,\mu}f$ is Gâteaux holomorphic, we need to show that for every $\mu\in B$ and $\nu\in M_0(S)$, the map $z\mapsto T(\mu+z\nu)$ is holomorphic on $\{z\in \mathbb{C}: \mu + z\nu \in B\}$. It is thus immediately clear that $T_f$ is Gâteaux holomorphic on any open neighbourhood of $M(S)$ as the map described above is affine in $z$ by linearity of integrals. Since this is true for any function $f$, the operator $P_{q,\mu}$ is Gâteaux holomorphic in $\mu$ on $B$.

Now, by Proposition \ref{GateauToFrechet}, $T$ is holomorphic on an open subset $U$ of a Banach space if and only if $T$ is Gâteaux holomorphic and continuous on $U$. Hence, combining the above with Lemma \ref{Bounded_in_mu}, we obtain that $\mu\mapsto \mathcal{P}_{q,\mu}$ is holomorphic on any open subset of $M(S)$.\\

We have now established that the Koopman operator is separately analytic in both $q$ (Lemma \ref{Analytic_in_q}) and $\mu$. It remains to prove that it is jointly analytic in both. From the proof of Lemma \ref{Bounded_in_mu}, it is straightforward to see that the Koopman operator is locally bounded in both $q$ and $\mu$ in the sense that for any open ball $B\subset M(S)\times \mathbb{C}$, we have 
\[ \sup_{(q,\mu)\in B} \Vert \mathcal{P}_{q,\mu}\Vert_{L(C^\alpha)} <\infty. \]
By \cite[Lemma 8.10]{mujica2010complex}, for any Banach spaces $E_1,E_2$ and $F$ and for $A$ an open subset of $E_1\times E_2$, if a map $f:A\to F$ is
separately holomorphic and locally bounded, then $f$ is holomorphic on $A$.
This proves that the map $(q,\mu)\mapsto P_{q,\mu}$ is analytic on $U\times B(0,q_0)$ for any open set $U\subset M(S)$.

\end{proof}

\section{Proof of Theorem \ref{MainTheorem1Analyticity}}\label{Proof_Of_Main_Section}
We now require a result to transfer Fréchet analyticity from the operator to its leading eigenvalue $e^{\Lambda_\mu(q)}$ and its normalized eigenfunctions. The proposition below is a variation of \cite[Theorem VII.1.7]{MR203473} from Kato's \textit{Perturbation Theory for Linear Operators}: we restrict the setting to the existence of a simple dominant eigenvalue, but we allow the index of the operator to be an element of a complex Banach space rather than $\mathbb{C}^n$. For a complex Banach space $X$, endow the space of linear operators $\mathcal{L}(X)$ from $X$ to itself with the operator norm induced from $X$.

\begin{proposition}\label{EigenAnalytic}
    Let $X$ be a complex Banach space, $U\subset X$ an open set, and $Y$ a Banach space. Suppose that $T: U \to \mathcal{L}(Y)$ (endowed with the operator norm) is Fréchet-holomorphic. Fix $x_0\in X$ such that the operator $T(x_0)$ has a simple isolated eigenvalue $l_0$. Then, there exists a open neighbourhood $V\subset X$ of $x_0$ and holomorphic maps $l: V \to \mathbb{C}$ and $h: V\to Y\setminus\{0\}$ such that $l(0)=l_0$ and \[ T(x)h(x)=l(x)h(x) \quad \text{for all } x\in V.  \] 
\end{proposition}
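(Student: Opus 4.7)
The plan is to adapt Kato's Riesz-projection argument for a simple isolated eigenvalue, originally written for a finite-dimensional parameter, to the Banach-space parameter $x \in X$, exploiting that Fréchet-holomorphic operator-valued maps behave essentially like classical analytic families in $x$.

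First, since $l_0$ is an isolated eigenvalue of $T(x_0)$, I would fix a small positively oriented circle $\Gamma \subset \mathbb{C}$ enclosing $l_0$ and no other point of $\sigma(T(x_0))$. On $\Gamma$ the resolvent $R(\zeta, x_0) = (\zeta I - T(x_0))^{-1}$ is bounded uniformly in $\zeta$. For $x$ in a small neighbourhood $V$ of $x_0$, I would use the Neumann series
\[ R(\zeta,x) \;=\; R(\zeta,x_0) \sum_{n=0}^{\infty} \bigl( (T(x)-T(x_0))\, R(\zeta,x_0) \bigr)^{n} \]
to establish existence of $R(\zeta,x)$ on $\Gamma \times V$ and its joint Fréchet-holomorphy in $(\zeta,x)$. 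Convergence is uniform on $\Gamma \times V$ after shrinking $V$, because $\|T(x)-T(x_0)\|_{\mathcal L(Y)}$ is small by continuity of the Fréchet-holomorphic map $T$, and each partial sum is holomorphic in $x$.

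Next, I would introduce the spectral projection
\[ P(x) \;:=\; \frac{1}{2\pi i} \oint_{\Gamma} R(\zeta,x)\, d\zeta, \]
which is an $\mathcal{L}(Y)$-valued Fréchet-holomorphic function of $x \in V$, since the $\mathcal{L}(Y)$-valued contour integral of a jointly holomorphic integrand over a compact curve preserves holomorphy in the parameter. Because $l_0$ is simple, $P(x_0)$ has rank one; by continuity $\|P(x) - P(x_0)\| < 1$ on a neighbourhood, and the standard argument that $P(x_0)|_{\mathrm{Im}(P(x))} \colon \mathrm{Im}(P(x)) \to \mathrm{Im}(P(x_0))$ is an isomorphism for $\|P(x)-P(x_0)\|$ small then forces $\dim \mathrm{Im}(P(x)) = 1$ throughout $V$ (possibly shrunk).

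To produce $h$ and $l$, I would pick $y_0 \in Y$ with $P(x_0) y_0 \neq 0$ and set $h(x) := P(x) y_0$, which is Fréchet-holomorphic and nonzero on a neighbourhood of $x_0$. Since $\mathrm{Im}(P(x))$ is $T(x)$-invariant and one-dimensional, we get $T(x) h(x) = l(x) h(x)$ for a unique scalar $l(x)$. Picking any $\phi \in Y^*$ with $\phi(h(x_0)) \neq 0$, I would define
\[ l(x) \;=\; \frac{\phi(T(x)\, h(x))}{\phi(h(x))}, \]
a ratio of holomorphic scalar functions with nonvanishing denominator near $x_0$, hence holomorphic on a neighbourhood. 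Evaluating at $x_0$ gives $l(x_0) = l_0$.

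The main obstacle is purely technical: I have to justify that the infinite-dimensional parameter version of Kato's construction goes through. Concretely, this requires checking that compositions and uniformly convergent series of Fréchet-holomorphic operator-valued maps are themselves Fréchet-holomorphic (so that the Neumann series defining $R(\zeta,x)$ is holomorphic in $x$), and that the Bochner/operator-valued integral over $\Gamma$ commutes with Fréchet-holomorphy in $x$. Both facts are standard in the Banach-space holomorphy literature (e.g. Mujica), and once invoked the rest of the argument is essentially the classical Dunford--Taylor calculus.
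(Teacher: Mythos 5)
Your proposal follows essentially the same route as the paper's proof: a Neumann-series argument to control the resolvent on a contour $\Gamma$ around $l_0$ uniformly in $x$ near $x_0$, Fréchet-holomorphy of the Riesz projection $P(x)$ obtained by contour integration, constancy of the (integer-valued) rank to conclude $\dim\mathrm{Im}\,P(x)=1$, and then $h(x)=P(x)y_0$ with the eigenvalue recovered as a quotient. Your formulation of the eigenvalue as $l(x)=\phi(T(x)h(x))/\phi(h(x))$ for a fixed functional $\phi$ is in fact a cleaner way to write what the paper expresses informally as $T(x)h(x)/h(x)$, but the argument is the same.
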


\begin{proof}
    For any operator $T$, denote by $\sigma(T)$ and $\rho(T)$ the spectrum and resolvent set of $T$ respectively. Since $l_0$ is an isolated eigenvalue, we have $\delta=d(l_0, \sigma(T(x_0))\setminus\{l_0\})>0$. Pick $r<\delta/2$ and define the closed curve \[\Gamma = \{\xi\in \mathbb{C}: |\xi-l_0|=r\}.\]
    It is clear that $\Gamma\subset\rho(T(x_0))$. Consider the resolvent operator $\tilde{T}_\xi(x)=(\xi I - T(x))^{-1}\in \mathcal{L}(Y)$ for $\xi\in\Gamma$. It is known that the map $\xi\mapsto \tilde{T}_\xi(x_0)$ is holomorphic on $\rho(T(x_0))$. This means the map is also continuous, and since the curve $\Gamma$ is compact, we have 
    \[ M_{\Gamma} = \sup_{\xi \in \Gamma} \Vert \tilde{T}_\xi(x_0) \Vert<\infty. \]
    By continuity of $x\mapsto T(x)$, there exists an open neighbourhood $V\subset U$ of $x_0$ such that for all $x\in V$, $\Vert T(x)-T(x_0)\Vert < \frac{1}{2 M_\Gamma}$. Define the operator $K: \Gamma \times V \to \mathcal{L}(Y)$ by \[ K(\xi,x) = \tilde{T}_\xi(x_0) \ (T(x)-T(x_0)) \]
    for all $x\in V, \xi\in\Gamma$.
    Then we can write $\xi I - T(x) = (\xi I - T(x_0)) (I-K(\xi,x))$. Notice that \[ \Vert K(\xi,x)\Vert \leq \Vert \tilde{T}_\xi(x_0) \Vert \ \Vert T(x)-T(x_0)\Vert < M_\Gamma ( 2 M_\Gamma)^{-1}=\frac{1}{2} . \]
    This implies that $I-K(\xi,x)$ is invertible with $(I-K)^{-1} = \sum_n K^n$ converging uniformly in $\xi\in \Gamma$. Hence, $(\xi I - T(x))$ is invertible and $\Gamma\subset \rho(T(x))$ for all $x\in V$. Practically this means that $\Gamma$ isolates a part of the spectrum in its interior for all the operators $T(x)$ with $x$ close enough to $x_0$. We also obtain a uniform bound in $x$ and $\xi$ on the resolvent along the curve:
    \begin{equation}\label{NormOfResolvent}
        \Vert \tilde{T}_\xi(x) \Vert\leq \frac{1}{1-\Vert K(\xi,x) \Vert} \Vert \tilde{T}_\xi(x_0)\Vert \leq 2M_\Gamma
    \end{equation}   
    Now consider the Riesz projection associated with the part of the spectrum of $T(x)$ the lies within the interior of the curve $\Gamma$: it is defined by the integral
    \[ P(x)=\frac{1}{2\pi i} \int_\Gamma \tilde{T}_\xi(x) d\xi \quad \text{for all } x\in V. \]
    For every $x\in V$, $P(x)$ is a linear operator whose range is the direct sum of the generalised eigenspaces corresponding to the eigenvalues of $T(x)$ that lie inside of $\Gamma$ (\cite{MR203473}).\\

    We claim the map $P: V \to \mathcal{L}(Y)$ if Fréchet-holomorphic. We prove this using Proposition \ref{GateauToFrechet}: we show the operator is continuous on $V$ and we establish Gâteaux holomorphicity, the combination of which proves the claim.  We start by proving holomorphicity along lines (Gâteaux holomorphicity), i.e. for any $h\in E$, the map $z\mapsto P(x+zh)$ is analytic for $z\in\mathbb{C}$ small enough. Since $x\mapsto T(x)$ is Fréchet holomorphic on $V$, the map $x\mapsto \tilde{T}_\xi(x)$ is also Fréchet holomorphic by \cite[Theorem VII.1.3]{MR203473} hence Gâteaux holomorphic. This means that $z\mapsto \tilde{T}_\xi(x+zh)$ is holomorphic on $B(0,R)$, where $R$ is such that $B(0,R)\subset \{z\in \mathbb{C}: x+zh \in V\}  $. By \cite[Corollary 7.2]{mujica2010complex} there exists a sequence $(c_n(\xi))_{n\in \mathbb{N}} \subset \mathcal{L}(Y)$ such that \[ \tilde{T}_\xi(x+zh) = \sum_{n=0}^\infty z^n c_n(\xi) \]
    with the series converging absolutely in operator norm for $|z|\leq s$ for $0<s<R$, and \[ c_n(\xi) = \frac{1}{2\pi i} \int_{|u|=R} \frac{\tilde{T}_\xi(x+zh)}{u^{n+1}}du. \]

    We can thus integrate termwise: $P(x+zh)=\sum_n z^n B_n$ where 
    \[ B_n = \frac{1}{2\pi i} \int_\Gamma c_n(\xi) d\xi \in \mathcal{L}(Y).  \]
    This series also converges absolutely in operator norm since, by \eqref{NormOfResolvent}, we have

\begin{align*}
    \sum_{n=0}^\infty \|B_n\| |z|^n & \leq \frac{\text{length}(\Gamma)}{2\pi} \sum_{n=0}^\infty |z|^n  \sup_{\xi\in\Gamma} \Vert c_n(\xi)\Vert \\ &  \leq \frac{r}{2\pi} \sum_{n=0}^\infty |z|^n  \sup_{\xi\in\Gamma}  \int_{|u|=R}   \frac{\Vert \tilde{T}_\xi(x+zh) \Vert}{u^{n+1}}du  \\ & \leq \frac{rM_\Gamma}{\pi} \sum_{n=0}^\infty \left(\frac{|z|}{R}\right) ^n <\infty
\end{align*}
since $|z|<R$.
    This proves that $P(x)$ is Gâteaux holomorphic. \\
    
    By \eqref{NormOfResolvent}, we have 
    \[ \sup_{x\in V} \Vert P(x)\Vert \leq \sup_{x\in V} \frac{1}{2\pi} \int_\Gamma \Vert \tilde{T}_\xi (x) \Vert d\xi  \leq \frac{\text{length}(\Gamma)}{2\pi} \sup_{x\in V, \xi\in\Gamma} \Vert \tilde{T}_\xi(x)\Vert   \leq  2r M_\Gamma <\infty \]
    which establishes boundedness in $x\in V$. By \cite[Proposition 8.6]{mujica2010complex}, a Gâteaux-holomorphic mapping is continuous if and only if it is locally bounded. Hence $P(x)$ is continuous in $x$.
    Together with Gâteaux holomorphicity we conclude that $P$ is Fréchet holomorphic on $V$.\\

    Since $l_0$ is a simple eigenvalue of $T(x_0)$ and $P$ projects onto the eigenspace corresponding to the eigenvalue $l_0$ operator, Rank$P(x_0)=1$. By continuity of $x\mapsto P(x)$, so is the rank of the operator, and since the rank can only take integer value, this means that Rank$P(x)=1$ on $x\in V$. This means that the eigenvalues $l(x)$ of $T(x)$ isolated within $\Gamma$ are also simple with a one-dimensional eigenspaces.

    In order to construct an eigenfunction, choose an arbitrary $u\in Y$ such that $P(x_0)u\ne 0$ and set $h(x)=P(x)u$. Since the eigenspace is one-dimensional, Range$P(x)=\mathbb{C}h(x)$ corresponds to the eigenspace of $l(x)$. Since $x\mapsto P(x)$ is holomorphic, so is $h(x)$ on $V$. We conclude all the eigenfunctions are analytic on $V$. For $h(x)\ne 0$, the eigenvalue can be expressed as \[l(x)=\frac{T(x)h(x)}{h(x)}\in \mathbb{C}.\]
    Hence the eigenvalue $l(x)$ is analytic in $x$ on $V$. This completes the proof.

\end{proof}

The following corollary finalises the proof of Theorem \ref{MainTheorem1Analyticity} (1)-(2).

\begin{corollary}\label{Corr_analyticity}
    Let $S\subset GL(d,\mathbb{R})$ be compact, and let $\mu_0\in P(S)$ such that the support of $\mu_0$ is strongly irreducible and proximal. There exists an open neighbourhood $V\subset \mathbb{C}\times M(S)$ of $(0,\mu_0)$  and an open neighbourhood $U\subset M(S)$ of $\mu_0$ such that the map $(q,\mu)\mapsto \Lambda_\mu(q) $ is analytic on $V$ and the map $\mu \mapsto \Lambda_\mu^{(k)}(0)$ is analytic on $U$ for all $k\in \mathbb{N}$. 
\end{corollary}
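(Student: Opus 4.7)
The plan is to read off this corollary as a direct synthesis of the two main ingredients of the previous sections: the joint analyticity of the twisted Koopman operator (Proposition \ref{Joint_Analyticity}) and the abstract eigenvalue perturbation result (Proposition \ref{EigenAnalytic}). No new hard analysis should be required; the work is in verifying hypotheses and making the identification of the abstract eigenvalue with $e^{\Lambda_\mu(q)}$.

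First, I would set $X = \mathbb{C}\times M(S)$, $Y=\mathcal{C}^\alpha(\mathbb{S}^{d-1})$, and $T(q,\mu)=\mathcal{P}_{q,\mu}$. By Proposition \ref{Joint_Analyticity}, there exist $q_0>0$ and an open neighborhood $U_0\subset M(S)$ of $\mu_0$ such that $T$ is Fréchet-holomorphic on $B(0,q_0)\times U_0$. To apply Proposition \ref{EigenAnalytic} at the base point $x_0=(0,\mu_0)$, I need $T(x_0)=\mathcal{P}_{0,\mu_0}$ to possess a simple isolated eigenvalue. Theorem \ref{Spectral_Gap_Operator} combined with Lemma \ref{LyapProps}(3) provides exactly this: $e^{\Lambda_{\mu_0}(0)}=1$ is a simple isolated eigenvalue of $\mathcal{P}_{0,\mu_0}$. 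Proposition \ref{EigenAnalytic} then yields an open neighbourhood $V\subset B(0,q_0)\times U_0$ of $(0,\mu_0)$ and a holomorphic map $l:V\to\mathbb{C}$ with $l(0,\mu_0)=1$ such that, for each $(q,\mu)\in V$, $l(q,\mu)$ is the simple isolated eigenvalue of $\mathcal{P}_{q,\mu}$ lying inside the small contour $\Gamma$ constructed in the proof of that proposition.

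Next, I would identify $l(q,\mu)$ with $e^{\Lambda_\mu(q)}$ whenever $\mu$ is a probability measure in $V$. For such $(q,\mu)$, by Theorem \ref{Spectral_Gap_Operator}, $e^{\Lambda_\mu(q)}$ is the unique simple isolated eigenvalue of $\mathcal{P}_{q,\mu}$ near $1$; by choosing the contour radius $r$ smaller if needed, it is precisely the eigenvalue isolated by $\Gamma$. Therefore $l(q,\mu)=e^{\Lambda_\mu(q)}$ on the probability-measure slice of $V$. Since $l(0,\mu_0)=1$ and $l$ is continuous, I can shrink $V$ so that $|l(q,\mu)-1|<1/2$ throughout $V$, and then set
\[
\Lambda_\mu(q) := \log l(q,\mu),
\]
with the principal branch. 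This is analytic as a composition of a holomorphic map with a holomorphic branch of the logarithm, and it agrees with the original definition on the probability-measure slice. This proves statement (1).

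For statement (2), I would fix $\mu_0$ and exploit the joint power series expansion of $\Lambda_\mu(q)$ at $(0,\mu_0)$. By the Cauchy integral formula applied in the $q$-variable along a small circle $|q|=\rho$ contained in $V$ (uniformly in $\mu$ in a neighbourhood $U\subset U_0$ of $\mu_0$), we have
\[
\Lambda_\mu^{(k)}(0) \;=\; \frac{k!}{2\pi i}\int_{|q|=\rho}\frac{\Lambda_\mu(q)}{q^{k+1}}\,dq.
\]
Since $\mu\mapsto\Lambda_\mu(q)$ is holomorphic and uniformly bounded in $\mu\in U$ and $|q|=\rho$ (by shrinking $U$ and using continuity at $(0,\mu_0)$), differentiating or integrating under the integral preserves Fréchet holomorphicity in $\mu$, yielding analyticity of $\mu\mapsto\Lambda_\mu^{(k)}(0)$ on $U$ for every $k\in\mathbb{N}$.

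The only delicate point — and the closest thing to an obstacle — is the identification of the abstract Kato-style eigenvalue $l(q,\mu)$ with $e^{\Lambda_\mu(q)}$ on the probability-measure slice, which requires one to match the isolating contours from Theorem \ref{Spectral_Gap_Operator} and from the proof of Proposition \ref{EigenAnalytic}. This is handled by shrinking $r$ and $V$ together so that $\Gamma$ isolates exactly the perturbation of the eigenvalue $1$. Everything else is a routine assembly of the previously established analyticity and spectral-gap results.
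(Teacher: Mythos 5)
Your proposal is correct and follows essentially the same route as the paper: apply Proposition \ref{EigenAnalytic} to the jointly analytic family $(q,\mu)\mapsto\mathcal{P}_{q,\mu}$ at $(0,\mu_0)$ and read off analyticity of the dominant eigenvalue $e^{\Lambda_\mu(q)}$, hence of $\Lambda_\mu(q)$ and its $q$-derivatives at $0$. Your treatment is in fact more careful than the paper's on two points it leaves implicit — the identification of the Kato-isolated eigenvalue with $e^{\Lambda_\mu(q)}$ on the probability-measure slice (and the choice of logarithm branch off that slice), and the Cauchy-integral argument for analyticity of the Taylor coefficients in $\mu$ — but these are gap-fillings, not a different approach.
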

\begin{proof}
Set $X=M(S)\times \mathbb{C}$ (product of two Banach spaces is a Banach space under the direct product norm), $Y=C^\alpha(\mathbb{S}^{d-1})$, $x_0=(\mu_0,0)$ and $l_0 = e^{\Lambda_{\mu_0}(0)}$ which by Lemma \ref{Analytic_in_q} is a dominant simple eigenvlue of $P_{0,\mu_0}$. The operator is jointly Fréchet analytic in $\mu$ and $q$ in a neighbourhood of $(\mu_0,0)$ by Proposition \ref{Joint_Analyticity}. Hence, by Proposition \ref{EigenAnalytic}, there exists an open neighbourhood $V$ of $(\mu_0,0)$ such that the eigenvalue $l(\mu,q) = e^{\Lambda_\mu (q)}$ and its normalized eigenfunction $\phi_{q,\mu}$ are jointly analytic in $q$ and $\mu$ on $V$.\\

This implies that $\Lambda_\mu(q)$ is analytic in both variables on $V$. In particular, the map $\mu\mapsto \Lambda^{(k)}_\mu (0)$ is analytic on some open neighbourhood $U\subset M(S)$ of $\mu_0$ for all $k\in\mathbb{N}$. We thus recover the result by \cite{amorim2025analiticity} about $\lambda_{\mu} = \Lambda_\mu'(0)$ being analytic in the measure, in addition to the asymptotic variance $\sigma^2_\mu = \Lambda_\mu''(0)$ being analytic on $U$.
\end{proof}

We prove Theorem \ref{MainTheorem1Analyticity} (3) in the next corollary by applying the result of Proposition \ref{EigenAnalytic} to the dual operator $\mathcal{P}^*_{0,\mu}\in \mathcal{L}(\mathcal{C}^\alpha (\mathbb{S}^{d-1})^*)$. $\nu_\mu$ is the eigenvector of $\mathcal{P}_{0,\mu}^*$ at eigenvalue $e^{\Lambda_\mu(0)}=e^0=1$ representing the unique invariant measure on $\mathbb{S}^{d-1}$ of the random matrix product when $\mu$ is a strongly irreducible and proximal probability measure. In this context we have \[ \nu_\mu (f) = \left<f,\nu \right>  = \int_{\mathbb{S}^{d-1}} f(x) \ d\nu_\mu(x) .\]
\begin{corollary}\label{corr_statio}
    Let $S\subset GL(d,\mathbb{R})$ be compact, and let $\mu_0\in P(S)$ such that the support of $\mu_0$ is strongly irreducible and proximal. There exists an open neighbourhood $U\subset M(S)$ of $\mu_0$ such that the map $\mu \mapsto \nu_\mu$ is analytic on $U$.  
\end{corollary}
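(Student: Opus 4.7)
The plan is to apply Proposition \ref{EigenAnalytic} to the family of adjoint operators $\mathcal{P}_{0,\mu}^{*}$ acting on the dual Banach space $Y^{*} := (\mathcal{C}^{\alpha}(\mathbb{S}^{d-1}))^{*}$, with index space $X = M(S)$ and base point $\mu_{0}$, and then to normalise the resulting eigen-functional to recover $\nu_{\mu}$. Since $\nu_{\mu}$ is a left eigenvector of $\mathcal{P}_{0,\mu}$, it is a right eigenvector of $\mathcal{P}_{0,\mu}^{*}$, so this dual perspective is the natural one.

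First I would upgrade the analyticity of $\mu \mapsto \mathcal{P}_{0,\mu}$ to the adjoint level. Specialising Proposition \ref{Joint_Analyticity} to $q=0$ gives Fréchet analyticity of $\mu \mapsto \mathcal{P}_{0,\mu} \in \mathcal{L}(Y)$ on some open neighbourhood of $\mu_{0}$. The adjunction $T \mapsto T^{*}$ is an isometric linear embedding $\mathcal{L}(Y) \hookrightarrow \mathcal{L}(Y^{*})$, and post-composition with a bounded linear map preserves Fréchet analyticity (each symmetric continuous $n$-linear coefficient in the power series of $\mathcal{P}_{0,\mu}$ transports to a symmetric continuous $n$-linear coefficient in $\mathcal{L}(Y^{*})$, with the same radius of convergence). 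Hence $\mu \mapsto \mathcal{P}_{0,\mu}^{*}$ is Fréchet analytic near $\mu_{0}$.

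Second I would check the spectral hypothesis. By Theorem \ref{Spectral_Gap_Operator}, $e^{\Lambda_{\mu_{0}}(0)} = 1$ is a simple isolated eigenvalue of $\mathcal{P}_{0,\mu_{0}}$; since a bounded operator on a Banach space and its adjoint share the same spectrum with equal algebraic multiplicities, $1$ is also a simple isolated eigenvalue of $\mathcal{P}_{0,\mu_{0}}^{*}$, with one-dimensional eigenspace spanned by the Furstenberg measure $\nu_{\mu_{0}}$. Proposition \ref{EigenAnalytic} then produces an open neighbourhood $V \subset M(S)$ of $\mu_{0}$ together with Fréchet analytic maps $l \colon V \to \mathbb{C}$ and $h \colon V \to Y^{*}\setminus\{0\}$ such that $l(\mu_{0}) = 1$, $\mathcal{P}_{0,\mu}^{*} h(\mu) = l(\mu)\, h(\mu)$ on $V$, and $h(\mu_{0}) = c\,\nu_{\mu_{0}}$ for some $c \neq 0$.

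Finally I would fix the normalisation. The evaluation $\langle \mathbf{1}_{\mathbb{S}^{d-1}}, \cdot \rangle$ is continuous linear on $Y^{*}$, so $\Phi(\mu) := \langle \mathbf{1}, h(\mu)\rangle$ is scalar-analytic on $V$ with $\Phi(\mu_{0}) = c \neq 0$. Shrinking $V$ to an open set $U$ on which $\Phi$ does not vanish, I would set
\[
\nu_{\mu} := \frac{h(\mu)}{\Phi(\mu)} \in Y^{*},
\]
which is Fréchet analytic on $U$ as the ratio of an analytic $Y^{*}$-valued map and a nowhere-vanishing analytic scalar function. For $\mu \in P(S)\cap U$ (more generally $\mu \in M_{1}(S)\cap U$) the identity $\mathcal{P}_{0,\mu}\mathbf{1} = \mu(S)\mathbf{1} = \mathbf{1}$ forces $l(\mu) = 1$ by simplicity of the perturbed eigenvalue, and the normalisation $\langle \mathbf{1}, \nu_{\mu}\rangle = 1$ then identifies $\nu_{\mu}$ with the unique Furstenberg stationary probability measure. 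The only real subtlety I anticipate is this last identification: ensuring that the eigen-functional delivered abstractly by Proposition \ref{EigenAnalytic} really is $\nu_{\mu}$ as defined in the statement, and not some rescaled sibling; but this is pinned down by the mass-one constraint and the one-dimensionality of the perturbed eigenspace. Everything else reduces to routine transfer of analyticity along bounded linear operations.
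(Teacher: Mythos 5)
Your proposal matches the paper's proof: both apply Proposition \ref{EigenAnalytic} to the adjoint $\mathcal{P}^{*}_{0,\mu}$ on $(\mathcal{C}^{\alpha}(\mathbb{S}^{d-1}))^{*}$, transferring analyticity through the isometric linear map $\mathcal{P}\mapsto\mathcal{P}^{*}$ and using that $1$ remains a simple isolated eigenvalue of the adjoint. Your additional normalisation step via $\langle \mathbf{1}, h(\mu)\rangle$ is a correct refinement of a point the paper leaves implicit.
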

\begin{proof}
    We begin by showing that the map $T^*$ given by $T^*(\mu)= \mathcal{P}^\star_{0,\mu}$ is Fréchet-analytic on some open neighbourhood of $\mu_0$. The map $T$ given by $T(\mu)=\mathcal{P}_{0,\mu}$ is Fréchet analytic on some open neighbourhood $U\subset M(S)$ of $\mu_0$ by Proposition \ref{Joint_Analyticity}. The map $D: \mathcal{L}(\mathcal{C}^\alpha (\mathbb{S}^{d-1})) \to \mathcal{L}(\mathcal{C}^\alpha (\mathbb{S}^{d-1})^*)$ given by  $D(\mathcal{P})=\mathcal{P}^*$ is a linear isometry, in particular it is continuous. Since the composition of a continuous linear map and a Fréchet analytic map is analytic (see for example \cite[Exercise 5.A]{mujica2010complex}), the map $T^* = D\circ T$ is Fréchet analytic on $U$. 

    By \cite[Theorem III.6.22]{kifer1982perturbations}, if $1$ is a simple isolated eigenvalue of $\mathcal{P}_{0,\mu_0}$, $1$ is a simple isolated eigenvalue of $\mathcal{P}^*_{0,\mu_0}$. Set $X=M(S)$, $Y=\mathcal{C}^\alpha (\mathbb{S}^{d-1})^*$ endowed with the dual norm \[ \|\nu\|_{\mathcal{C}^\alpha(\mathbb{S}^{d-1})^*} = \sup\{ |\nu(f)| : ||f||_{\mathcal{C}^\alpha} \leq 1, f\in \mathcal{C}^\alpha (\mathbb{S}^{d-1}) \}   ,\] $x_0=\mu_0$ and $l_0=1$. By proposition \ref{EigenAnalytic}, there exists an open neighbourhood $U'\subset M(S)$ of $\mu_0$ such that the eigenvector $\nu_\mu$ is Fréchet analytic on $U'$.
\end{proof}

\subsection*{Acknowledgements}
The authors gratefully acknowledge useful discussions with Artur Amorim and Richard Aoun. 
CC, VPHG and JSWL have been supported by the EPSRC Centre for Doctoral Training in Mathematics of Random Systems: Analysis, Modelling and Simulation (EP/S023925/1). VPHG thanks the Mathematical Institute of Leiden University for their hospitality. 
JSWL has been supported by the EPSRC (EP/Y020669/1) and thanks JST (Moonshot R \& D Grant Number JPMJMS2021 - IRCN, University of Tokyo) and GUST (Kuwait) for their research support.

\bibliographystyle{plain} 
\bibliography{ERMP}

\end{document}